    \pgfplotsset{compat=1.16}
    \def\l@subsection{\@tocline{2}{0pt}{2.9pc}{5pc}{}}
    \def\l@subsubsection{\@tocline{2}{0pt}{5pc}{7.5pc}{}}
\numberwithin{equation}{section}
\newtheorem{thm}[equation]{Theorem}
\newtheorem{lemma}[equation]{Lemma}
\newtheorem{prop}[equation]{Proposition}
\newtheorem{crlr}[equation]{Corollary}
\theoremstyle{definition}
\theoremstyle{remark}
\newtheorem{rmk}[equation]{Remark}
\newcommand{\N}{\mathcal N}
\newcommand{\Z}{\mathbb Z}
\newcommand{\A}{\mathbb A}
\newcommand{\T}{\mathbb T}
\newcommand{\C}{\mathbb C}
\newcommand{\F}{\mathcal F}
\newcommand{\calR}{\mathcal R}
\newcommand{\one}{\mathbbm{1}}
\newcommand{\Abs}[1]{\left\lvert#1\right\rvert}
\newcommand{\jap}[1]{\langle#1\rangle}
\newcommand{\ls}{\lesssim}
\newcommand{\gs}{\gtrsim}
\newcommand{\wb}{\overline}
\newcommand{\eps}{\varepsilon}
\newcommand{\wc}{\widecheck}
\newcommand{\wh}{\widehat}
\begin{document}
\title[Averages with the Gaussian divisor]{Averages with the Gaussian divisor: Weighted Inequalities and the Pointwise Ergodic Theorem}
\author[GIANNITSI]{Christina Giannitsi}
    \address{Department of Mathematics, Vanderbilt University, Nashville, TN, USA}
    \email {christina.giannitsi@vanderbilt.edu}
\author[MIHEISI]{Nazar Miheisi}
    \address{Department of Mathematics, King's College London, London, UK}
    \email{nazar.miheisi@kcl.ac.uk}
\author[MOUSAVI]{Hamed Mousavi}
    \address{Department of Mathematics, University of Bristol, Bristol, UK}
    \email {gj23799@bristol.ac.uk}
\maketitle

\begin{abstract}	
    We discuss the Pointwise Ergodic Theorem for the Gaussian divisor function $d(n)$, that is, for a measure preserving $\Z[i]$ action $T$, the limit 
    $$\lim_{N\rightarrow \infty} \frac{1}{D(N)} \sum _{\mathcal N (n) \leq N} d(n) \,f(T^n x) $$ 
    converges for every $f\in L^p$, where $\mathcal N (n) = n \bar{n}$, and $D(N) = \sum _{\mathcal N (n) \leq N} d(n) $,  and $1<p\leq \infty$. To do so we study the averages 
    $$ A_N f (x) = \frac{1}{D(N)} \sum _{\mathcal N (n) \leq N} d(n) \,f(x-n) ,$$ 
    and obtain improving and weighted maximal inequalities for our operator, in the process. 
\end{abstract}

\tableofcontents
\section{Introduction}

For $1\leq p\leq \infty$, we will say that a function $w:\Z_+\to [0,\infty)$, $w\notequiv 0$,
is \emph{universally $L^p$-good} if for every
measure preserving system $(X,\mu,T)$, and every $f\in L^p(\mu)$, the averages
$$
\frac{1}{W(N)}\sum_{n\leq N} w(n) f(T^n x)
$$
converge almost everywhere, where $W(N) = \sum_{n\leq N} w(n)$.
The classical pointwise ergodic theorem of Birkhoff asserts
that the function $w(n)\equiv1$ is universally $L^1$-good. Establishing this
property for functions with non-trivial arithmetic structure is more challenging.
Here, the archetypal results are the celebrated theorems of Bourgain, and
extended by Weirdl, Mirek and Trojan, showing that the indicator function of the square numbers
or the primes is universally $L^p$-good for all $1<p\leq \infty$ \cite{bourgain1988, bourgain1989, Mirek-Trojan2015, wierdl1988}. Buczolich and Mauldin, and subsequently La Victoire, showed that this range of $p$ is sharp by excluding $p=1$ \cite{Buczolich-Mauldin2010, LaVictoire2011}. We refer to the papers \cite{Boshernitzan-Wierdl1996, Nair1991, Rosenblatt-Wierdl1995} for other results of
this type and alternative presentations of the relevant arguments. An arithmetic
function of particular importance is the divisor function, which counts
the number of positive divisors of an integer $n$. In 2017, Cuny and
Weber proved that it is universally $L^p$-good for all $1<p\leq \infty$ \cite{CW2017}.

\subsection{Main theorems}

The aim of this paper is to study the averaging properties of the divisor function
$d$ in the Gaussian integers $\Z[i]=\{ a +ib \, : \, a,b \in \Z\}$, where
$$
d(n) = \sum_{\substack{d\, : \,d|n}}  1, \quad n \in \Z[i].
$$ 
We begin with some definitions. We say that $(X,\mu,T)$ is a measure
preserving $\Z[i]$-action if $T=(T_1,T_2)$ is a pair of commuting, invertible,
measure preserving transformations of a finite measure space $(X,\mu)$. In this
case, for $n=a+ib\in\Z[i]$ we write $T^n:=T_1^a T_2^b$. As before, we say that $w:\Z[i]\to [0,\infty)$, $w\notequiv 0$, is \emph{universally $L^p$-good} if for every measure preserving
$\Z[i]$-action $(X,\mu,T)$, and every $f\in L^p(\mu)$, the averages
\begin{equation}\label{eq:ergodic-def}
M_N^T(w,f)(x):=\frac{1}{W(N)}\sum_{\N(n)\leq N} w(n) f(T^n x)
\end{equation}
converge as $N\to\infty$ for $\mu$-a.e $x\in X$, where $\N(a+ib) = a^2 + b^2 $
is the algebraic norm of $a+ib$ and $W(N) = \sum_{\N(n)\leq N} w(n)$. Our first
main result is the following:

\begin{thm}\label{t:petdivisor}
The divisor function $d$ is universally $L^p$-good for all $1<p\leq\infty$.
\end{thm}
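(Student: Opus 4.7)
The plan is to apply the classical two-step framework in pointwise ergodic theory: establish (i) an $L^p$ maximal inequality for the ergodic averages $M_N^T(d,\cdot)$, and (ii) almost-everywhere convergence on a dense subclass of $L^p$. Standard functional-analytic arguments then combine these into a.e. convergence for every $f\in L^p$.

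For (i), the Calder\'on transference principle, adapted to $\Z[i]$-actions, reduces the maximal inequality on $L^p(X,\mu)$ to the $\ell^p(\Z[i])$ boundedness of $f\mapsto \sup_N |A_N f|$, where $A_N$ is the convolution operator from the abstract. This bound is among the maximal estimates the paper develops for $A_N$, so at this step it may be taken as given.

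For (ii), a natural approach is the Dirichlet hyperbola identity in $\Z[i]$: writing $d(n)=\sum_{\alpha\mid n} 1$ and substituting $n=\alpha\beta$,
\begin{equation*}
\sum_{\N(n)\leq N} d(n)\,f(T^n x)
= 2\sum_{\N(\alpha)\leq \sqrt{N}}\;\sum_{\N(\beta)\leq N/\N(\alpha)} f(T^{\alpha\beta}x)
\;-\; \sum_{\N(\alpha),\,\N(\beta)\leq \sqrt{N}} f(T^{\alpha\beta}x).
\end{equation*}
For each fixed $\alpha\in\Z[i]\setminus\{0\}$, the inner sum in $\beta$ is, up to the cardinality of the ball, a ball average for the $\Z[i]$-action $\beta\mapsto T^{\alpha\beta}$, and by the Wiener pointwise ergodic theorem for $\Z^2$-actions its normalisation converges $\mu$-a.e.\ as the radius tends to infinity. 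After normalising the whole sum by $D(N)\asymp c\,N\log N$, I would transfer this fixed-$\alpha$ convergence to convergence of the full sum for $f$ in a suitable dense subclass of $L^p$, using the maximal inequality of (i) to dominate the tails.

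The main obstacle is controlling the uniformity in $\alpha$: the range of $\alpha$ grows with $N$, so fixed-$\alpha$ convergence alone is not enough. The clean route is to establish an oscillation or long-variation estimate for the operators $A_N$, via a circle-method analysis on the torus $\T^2$ of the exponential sums $\sum_{\N(n)\leq N} d(n)\,e(n\cdot\xi)$: the main terms near rational points will carry Gauss-type sums over the residue rings $\Z[i]/q\Z[i]$, and the minor-arc contribution must be shown to be small on average in $N$. In spirit this mirrors the Cuny--Weber treatment of the classical divisor function, now transposed to the two-dimensional Gaussian lattice, and it is here that most of the work of the paper should be concentrated. With such an estimate in hand, a density and interpolation argument completes the proof for all $1<p\leq\infty$.
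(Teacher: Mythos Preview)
Your high-level plan is correct and matches the paper's approach: Calder\'on transference plus the $\ell^p$ maximal bound handle step (i), and an $L^2$ oscillation inequality proved by circle-method analysis of $\wh{A_N}$ handles step (ii), after which density and the Banach principle finish the argument for all $1<p\leq\infty$.

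One comment: the Dirichlet-hyperbola discussion in your step (ii) is a detour you yourself abandon, and indeed it plays no role in the paper's ergodic argument. In the paper the hyperbola method appears only at the number-theoretic level, to evaluate the exponential sums $\wh{S_N}(a/q)$ at rational points (Lemma~\ref{lem:rationals}); it is not used to identify a dense class on which convergence is elementary. The actual route is exactly the one you describe at the end: approximate $\wh{A_N}$ on the major arcs by simple multipliers localised near rationals (Lemma~\ref{l:applemm}), compare these to bump functions $\wh{V_{N,s}}$ via a square-function estimate (Lemma~\ref{lem:sq-function}), and then invoke a Bourgain-type multifrequency maximal inequality (Theorem~\ref{t:multi}) to obtain the oscillation inequality (Theorem~\ref{thm:oscillation}). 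This gives a.e.\ convergence for every $f\in L^2$, not merely a dense subclass; density of $L^2$ in $L^p$ together with the maximal inequality then covers $1<p<2$.
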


By the Calder\'{o}n transference principle, Theorem \ref{t:petdivisor} is reduced
to analysis of the averages
\begin{equation}
A_N f (x) = \frac{1}{D(N)} \sum _{\N(n) \leq N} d(n) f(x-n),
\end{equation}
where $D(N) = \sum _{\N(n) \leq N} d(n)$ is the divisor summatory function.
Our other main results, Theorems \ref{thm-improving} and \ref{thm-sparse} below,
give a scale free, $\ell ^p$ improving estimate for these operators, as well as
maximal bounds in weighted $\ell^p$ spaces. These results are in the realm
of discrete harmonic analysis, an area motivated by pointwise ergodic theory
since its introduction. We point to the survey \cite{krause2022} for an
exposition of relevant ideas.

Before stating the results, we need to introduce
some additional notation. For a function $f$ on $\Z[i]$, and a finite set
$E \subset \Z[i]$, we write the $p$-norm average of $f$ on $E$ as
\begin{equation*}
\langle f \rangle _{E,p} = \left( \frac{1}{|E|} \, \sum _{x \in E} |f(x)|^p \right) ^{1/p}.
\end{equation*}
If $E$ is a disk, we let $2E$ be the disk with the same centre and
twice the radius.
Finally, if $1\leq p \leq \infty$ then $p'$ will denote its conjugate
exponent, so that $\dfrac{1}{p} + \dfrac{1}{p'} = 1$.

\begin{thm} \label{thm-improving}
For $1<p<2$, there exists $C_p>0$ such that for all positive integers
$N$, all disks $E$ of radius $\sqrt{N}$, and all functions $f$, 
\begin{equation}
\langle A_Nf \rangle _{E,p'} \leq C_p \,  \langle f \rangle _{2E,p}.
\end{equation}
\end{thm}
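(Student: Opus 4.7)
The plan is to apply the Dirichlet hyperbola identity to express $A_N$ as a superposition of disk-like averages, and then invoke the sharp $\ell^p$-improving inequality for ordinary disk averages on $\Z[i]$ at each relevant scale. Writing $d(n)=\sum_{a\mid n}1$ (Gaussian divisors, modulo units) and using the symmetry $a\leftrightarrow n/a$, one gets
\[
A_N f(x) \;=\; \frac{2}{D(N)} \sum_{\N(a)\leq \sqrt N} T_a f(x) \;-\; \frac{1}{D(N)} \sum_{\substack{\N(a)\leq \sqrt N\\ \N(b)\leq \sqrt N}} f(x-ab), \qquad T_a f(x):=\sum_{\N(b)\leq N/\N(a)} f(x-ab),
\]
so the problem reduces to controlling each $T_a$ and then summing.

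For fixed $a$, the operator $T_a$ is convolution with the indicator of the set $B_a := a\cdot\{b:\N(b)\leq N/\N(a)\}$, which lives on the index-$\N(a)$ sublattice $a\Z[i]\subset \Z[i]$. The key observation is that after partitioning $\Z[i]$ into the $\N(a)$ cosets of $a\Z[i]$ and reparametrising each coset as $x=c+ay$, the restriction of $T_a f$ becomes an ordinary (unsparsified) disk convolution of radius $\sqrt{N/\N(a)}$ applied to $f_c(y):=f(c+ay)$. The standard $\ell^p$-improving estimate for disk averages on $\Z[i]$---sharp and free of logarithmic losses---may then be applied coset by coset, and the resulting bounds reassembled using the sub-additivity of $\ell^p$-norms on disjoint cosets. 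This yields a bound for each $T_a f$ in $\ell^{p'}(E)$ in terms of $\langle f\rangle_{CE,p}$, scaling correctly with $\N(a)$. The final step is to sum over $\N(a)\leq \sqrt N$ against the weight $|B_a|/D(N) \sim (\N(a)\log N)^{-1}$, invoking the asymptotics $D(N)\sim cN\log N$ and $\sum_{\N(a)\leq \sqrt N}\N(a)^{-1}\sim \tfrac{1}{2}\log N$ to extract the constant-in-$N$ bound. The diagonal correction term is constrained to small $a$ \emph{and} small $b$ and is handled by an analogous but lower-order argument.

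The main obstacle is executing the summation over $a$ without accumulating a spurious $(\log N)^{1-2/p'}$ factor: a crude triangle inequality on the $a$-sum (equivalently, a direct application of Young's inequality to $A_N$ using moment bounds such as $\sum_{\N(n)\leq N} d(n)^q \lesssim N(\log N)^{2q-1}$) fails by precisely this amount. Overcoming this requires exploiting the exact dependence of the disk $\ell^p$-improving constant on the radius $\sqrt{N/\N(a)}$, arranged, e.g., through a dyadic decomposition in $\N(a)$, so that the weighted summation telescopes against $1/D(N)$ rather than losing a logarithm. This delicate matching of scales is where the bulk of the technical work will go.
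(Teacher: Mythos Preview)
Your approach has a genuine gap. The scale-free disk $\ell^p\to\ell^{p'}$ improving estimate \emph{coincides} with Young's inequality when the kernel is an indicator, so applying it coset by coset to $T_a$ yields exactly $\|T_af\|_{\ell^{p'}(E)}\ls (N/\N(a))^{2/p'}\|f\|_{\ell^p(2E)}$ and nothing more. Summing this over $\N(a)\le\sqrt N$ by the triangle inequality, and using that $\sum_{\N(a)\le\sqrt N}\N(a)^{-2/p'}\simeq N^{1/2-1/p'}$ (note $2/p'<1$), gives
\[
\langle A_Nf\rangle_{E,p'}\ \ls\ \frac{N^{1/2-1/p'}}{\log N}\,\langle f\rangle_{2E,p},
\]
a \emph{power} loss for every $p\in(1,2)$, not the logarithmic one you anticipate. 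The two routes you describe as ``equivalent'' are not: applying Young directly to the full kernel $d\cdot\one_{\N\le N}$ is strictly better than the $a$-sum triangle inequality, because it exploits the interference in $d(n)=\sum_{a\mid n}1$ inside the $\ell^{p'/2}$-moment. Even so, the correct moment bound is $\sum_{\N(n)\le N}d(n)^q\asymp N(\log N)^{2^q-1}$ (not $2q-1$; the two agree only at $q=1,2$), and the resulting polylog loss is genuine and does not telescope.

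More fundamentally, the recovery mechanism you sketch does not exist. The disk improving constant is scale-free, so there is no ``exact dependence on the radius $\sqrt{N/\N(a)}$'' to exploit, and a dyadic decomposition in $\N(a)$ produces no telescoping: the dyadic blocks $\sum_{\N(a)\sim 2^j}T_a$ are themselves averages against arithmetic (restricted-divisor) weights, and proving improving for them is not easier than for $A_N$ itself. The underlying point is that $\langle A_Nf\rangle_{E,p'}\ls\langle f\rangle_{2E,p}$ encodes the cancellation in $\sum_{\N(n)\le N}d(n)\,e(\jap{n,\alpha})$ at frequencies $\alpha$ away from rationals with small denominator, and taking absolute values at the $a$-sum stage destroys precisely this cancellation. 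The paper accesses it through a High/Low (major/minor arc) decomposition of $\wh{A_N}$: the Low part obeys an $\ell^p\to\ell^\infty$ bound via moment estimates for Ramanujan sums, the High part has $\ell^2$ operator norm $O(N^{-\delta})$ from the minor-arc exponential sum bound, and the two are balanced in a restricted weak-type argument with an optimised choice of $\delta$.
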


A collection $\mathcal S$ of boxes or balls is called sparse if for every $I \in \mathcal S$
there exists a set $E_I \subset I$ so that $|E_I|>|I|/2$ and $E_I$ for $I \in
\mathcal S$ are pairwise disjoint. In addition, the maximal operator associated
to the sequence $A_N$ is defined in the usual way:
$$ A^* f (x) = \sup_N A_Nf(x). $$

\begin{thm} \label{thm-sparse}
	For $1< r,s<2$, the maximal operator $A^*$ is of $(r,s)$-sparse type -- that is, there exists $C>0$ such that for all compactly supported functions $f$ and $g$, there exists a sparse collection $\mathcal S$ so that the $\ell ^2 (\Z[i])$ inner product satisfies
	\begin{equation}
	|(A^*f,g)|\leq C \sum _{I \in \mathcal S}  |I| \langle f \rangle _{I,r} \langle g \rangle _{I,s}
	\end{equation}
\end{thm}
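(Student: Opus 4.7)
The plan is to establish the sparse bound via a standard stopping-time / Calderón--Zygmund decomposition, using Theorem \ref{thm-improving} as the key single-scale input, in the spirit of Lacey's sparse domination framework. By sublinearity and positivity of the averages, I may assume $f, g \ge 0$ and compactly supported. First I linearize, writing $A^* f(x) = A_{N(x)} f(x)$ for a measurable scale function $N: \Z[i] \to \Z_+$, and then use the regularity $D(N) \asymp N\log N$ together with nonnegativity of $d$ to reduce to dyadic scales $N(x) \in \{4^j : j \ge 0\}$, at the cost of a universal multiplicative constant.

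Fix a dyadic grid $\mathcal D$ of disks in $\Z[i]$, and let $I_0 \in \mathcal D$ be large enough to contain the supports of $f, g$ and all relevant scales. I define the stopping children $\mathcal S_1(I_0)$ as the maximal dyadic $I \subsetneq I_0$ with
$$\langle f\rangle_{I,r} > C_0 \langle f\rangle_{I_0,r} \quad \text{or} \quad \langle g\rangle_{I,s} > C_0 \langle g\rangle_{I_0,s}.$$
For $C_0$ large enough, the weak-type $(r,r)$ and $(s,s)$ Hardy--Littlewood maximal inequalities guarantee that $\bigcup_{I \in \mathcal S_1(I_0)} I$ occupies at most $|I_0|/2$, so $E_{I_0} := I_0 \setminus \bigcup_{I \in \mathcal S_1(I_0)} I$ has measure at least $|I_0|/2$. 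Iterating on each stopping child and starting from a suitable top-level $I_0$ produces a nested collection $\mathcal S$, sparse by these density estimates.

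The heart of the argument is the single-step bound
$$\sum_{x \in E_{I_0}} A^* f(x)\, g(x) \le C |I_0|\, \langle f\rangle_{2I_0,r}\, \langle g\rangle_{2I_0,s}.$$
Choose an auxiliary $p \in (1, \min(r,s))$, for which Theorem \ref{thm-improving} applies. For each dyadic scale $j$, set $\tilde E_j = E_{I_0} \cap \{N(x) \in [4^j, 4^{j+1})\}$ and cover $\tilde E_j$ by dyadic disks $D \in \mathcal D$ of radius $2^{j+1}$. By Hölder's inequality and Theorem \ref{thm-improving},
$$\sum_{x \in D \cap \tilde E_j} A_{4^{j+1}} f(x)\, g(x) \le \|A_{4^{j+1}} f\|_{\ell^{p'}(D)} \,\|g\|_{\ell^{p}(D \cap \tilde E_j)} \le C |D|^{1/p'} \langle f\rangle_{2D,p}\,\|g\|_{\ell^{p}(D \cap \tilde E_j)}.$$
Because $D$ meets $E_{I_0}$, the stopping rule forces $\langle f\rangle_{2D,p} \le \langle f\rangle_{2D,r} \le C_0 \langle f\rangle_{2I_0,r}$ and a parallel estimate for $g$ in the $s$-average. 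Summing over tiles $D$ and scales $j$, using the disjointness identity $\sum_j |\tilde E_j| = |E_{I_0}|$ and the slack afforded by $p < \min(r,s)$ to obtain geometric decay in $j$, closes the single-step estimate. Summing this over $\mathcal S$ then completes the sparse domination.

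The main obstacle is the scale summation inside the single-step bound: a naive estimate at each dyadic scale $j$ would introduce a factor of $\log \mathrm{rad}(I_0)$. To eliminate it, one exploits the strict inequality $p < \min(r,s)$: the Hölder factor $\left( |D \cap \tilde E_j|/|D| \right)^{1/p - 1/s}$ becomes a nontrivial fractional weight, which lets one interpolate between the trivial tile-count bound $\sum_D |D| \lesssim |I_0|$ and the set-measure identity $\sum_D |D \cap \tilde E_j| \lesssim |\tilde E_j|$, producing the needed geometric decay in $j$. This is the standard $\ell^p$-improving-implies-sparse mechanism in our Gaussian setting; once correctly set up, the rest is routine iteration of the stopping construction.
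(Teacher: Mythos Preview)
There is a genuine gap in the scale summation. After applying Theorem \ref{thm-improving} at each dyadic level $j$ and using the stopping control on $f$ and $g$, the contribution of level $j$ is
\[
C\,\langle f\rangle_{2I_0,r}\,\langle g\rangle_{2I_0,s}\,\sum_{D}|D|^{1-\theta}\,|D\cap\tilde E_j|^{\theta},
\qquad \theta=\tfrac{1}{p}-\tfrac{1}{s}\in(0,1).
\]
Summing over $D$ (which tile $I_0$) and then over $j$ gives, by H\"older/concavity,
\[
\sum_{j}\sum_{D}|D|^{1-\theta}|D\cap\tilde E_j|^{\theta}
\;\le\;|I_0|^{1-\theta}\sum_{j}|\tilde E_j|^{\theta}
\;\le\;J^{\,1-\theta}\,|I_0|,
\]
where $J\simeq\log(\mathrm{rad}\,I_0)$ is the number of relevant dyadic scales. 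There is no geometric decay in $j$: the ``slack'' $p<\min(r,s)$ only converts between $p$- and $r,s$-averages at a \emph{fixed} scale, it does not introduce any dependence on the ratio $|D|/|I_0|$. So your single-step bound carries an unavoidable power of $\log|I_0|$, which destroys the sparse conclusion. The ``$\ell^p$-improving implies sparse'' mechanism you invoke (as in Lacey's spherical work or Kesler--Lacey--Mena) always requires a second, genuinely multi-scale input---a continuity/variation estimate in the continuous case, or a maximal $\ell^2$ bound in the discrete case---precisely to kill this logarithm.

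The paper does \emph{not} route the sparse bound through Theorem \ref{thm-improving}. Instead it proves the stopping-time estimate (Lemma \ref{sparselm}) directly from the high/low machinery, splitting $A_T$ into three pieces $H_{T,1}+H_{T,2}+L_{T,0}$. The pieces $H_{T,1}$ and $L_{T,0}$ are handled by \eqref{eq:highpart} and Lemma \ref{lem:low-part} respectively, as in the fixed-scale proof. The new and essential ingredient is the multi-frequency maximal inequality (Theorem \ref{t:multi}), which controls $\sup_N$ of the intermediate piece $H_{T,2}$ in $\ell^2$ with only a $\log\log$ loss, summable over dyadic shells in $\N(q)$. This Bourgain-type input is exactly what absorbs the supremum over scales; it has no counterpart in your argument, and something of this strength is needed.
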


It is well-established that a sparse bound as the one in Theorem \ref{thm-sparse}, implies not only $\ell ^p$  boundedness for the maximal operator, but also weighted inequalities for weights $w$ for which the ordinary Hardy-Littlewood maximal function is bounded.

\begin{crlr}\label{c:maximalestimate}
	For any $1<p<\infty$ and all weights $w$ in the Muckenhoupt class $A_p$, the maximal operator $A^* : \ell ^p (w) \to \ell ^p (w)$ is a bounded operator.
\end{crlr}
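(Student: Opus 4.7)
The plan is to deduce this corollary directly from Theorem \ref{thm-sparse} by invoking the well-established machinery that converts $(r,s)$-sparse form bounds into weighted $\ell^p$-norm inequalities. The essential feature of Theorem \ref{thm-sparse} that enables this is the flexibility in the exponents $r,s$ --- both may be taken arbitrarily close to $1$ --- which suffices to cover the entire $A_p$ class for every $1<p<\infty$.

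Given $w\in A_p$, the openness of the Muckenhoupt classes provides some $\eta>0$ with $w\in A_{p-\eta}$ and $w^{1-p'}\in A_{p'-\eta}$. I would choose $r\in(1,\min\{2,p-\eta\})$ and $s\in(1,\min\{2,p'-\eta\})$ sufficiently close to $1$ that the side conditions $w\in A_{p/r}\cap RH_{(s'/p)'}$ hold. Theorem \ref{thm-sparse} then produces, for any compactly supported $f,g$, a sparse collection $\mathcal S$ of disks with
\begin{equation*}
|(A^* f, g)| \leq C \sum_{I\in \mathcal S} |I| \, \langle f\rangle_{I,r} \, \langle g\rangle_{I,s}.
\end{equation*}
The standard sparse-to-weighted principle, developed in the Euclidean setting by Bernicot--Frey--Petermichl, Conde-Alonso--Culiuc--Di Plinio--Ou, and many others, converts this bilinear sparse form into the desired bound $\|A^* f\|_{\ell^p(w)} \ls \|f\|_{\ell^p(w)}$ under precisely the side conditions chosen above.

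The one point worth addressing is that this machinery is usually phrased over $\R^d$, while we work on $\Z[i]$. Since $\Z[i]$ is isometric to $\Z^2\subset\R^2$ with the sparse family consisting of Euclidean disks, the ambient space is doubling and the classical $A_p$ theory for the Hardy--Littlewood maximal function transfers verbatim. The proof itself is then routine: pair $A^* f$ against a generic $g\in\ell^{p'}(w^{1-p'})$, apply H\"older's inequality on each disk $I\in\mathcal S$ to incorporate the weight and its dual into the averages $\langle f\rangle_{I,r}$ and $\langle g\rangle_{I,s}$, and then sum using the pairwise disjointness of the sparse cores $E_I$ together with the weighted maximal function bounds. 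I expect no genuine obstacle beyond carefully tracking these standard ingredients, and the main benefit of this presentation is that it isolates the purely arithmetic content of the paper in Theorem \ref{thm-sparse}, leaving the weighted consequence to general principles.
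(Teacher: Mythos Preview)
Your proposal is correct and matches the paper's own treatment: the paper does not give a separate proof of this corollary but simply remarks (immediately before stating it) that it is ``well-established'' that an $(r,s)$-sparse bound as in Theorem~\ref{thm-sparse} implies weighted $\ell^p$ inequalities for Muckenhoupt weights. Your write-up just fleshes out that standard implication with the appropriate references and the choice of exponents via the openness of $A_p$, which is exactly what one would expect.
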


The analogues of Theorems \ref{thm-improving} and \ref{thm-sparse} have been
proved for many other averaging operators. Indeed, \cite{giannitsi2022} established
these for the divisor function in the usual integers. However, a particular
novelty of our results is the setting, and to the best of our knowledge the
only other improving and maximal type estimates in the Gaussian integers are
in \cite{gklmr2023}.

\subsection{Overview of our approach}

Our approach is based on the following, well-established idea: we work
on the Fourier side and use the Hardy-Littlewood circle method to analyse
our operators. In particular, we approximate our multiplier $\wh{A_N}$
by one which is localised near the major arcs. In addition, these
approximating multipliers behave like the usual averages on their
supports. The precise statement is Lemma \ref{l:applemm}\ref{l:applemm-power} below.

For the improving and sparse bounds, we use a high/low decomposition.
The \emph{low} part is the main term in the approximation of our multiplier.
We show that this satisfies an $\ell^p\to\ell^\infty$ estimate for all $1<p\leq2$. The \emph{high} part is the error term, which satisfies a
good $\ell^2\to\ell^2$ estimate. These bounds are leveraged in the proof
of Theorems \ref{thm-improving} and \ref{thm-sparse}.

For the pointwise ergodic theorem, our approach is based on ideas of
Bourgain and Lacey (see e.g. \cite{Rosenblatt-Wierdl1995}). Since our
sparse bound implies an $\ell^p$ maximal inequality, it suffices to
prove an oscillation inequality (Theorem \ref{thm:oscillation}).
This is achieved by showing that, in an appropriate sense, $\wh{A_N}$ is
close to a sum of bump functions and then appealing to Bourgain's
multifrequency lemma.

\bigskip

\section{Preliminaries}\label{sec:preliminaries}

We use notation that is standard in the field. Specifically, for two quantities $a$ and $b$, we shall write $a \lesssim b$ if there exists a positive constant $C$ such that $a \leq C \, b$. We shall write $a \lesssim _p b$ if the implied constant depends on $p$. We also write
$a\simeq b$ to mean $a\ls b$ and $b\ls a$.

Throughout the paper we let $e(x)=e^{2\pi i x}$, and for complex numbers $z=x+iy$ and $w=u+iv$, we let $\jap{z,w}=xu+yv$. Then the Fourier
transform of $f:\Z[i]\to\C$ is the function on $\T^2=\C/\Z[i]$ given by
\begin{equation*}
\widehat f (\alpha) = \F f (\alpha) = \sum _{x \in \Z[i]}
f(k)\; e(\langle x ,-\alpha \rangle).
\end{equation*}
In addition, $\widecheck f$ or $\F ^{-1}$ denote the inverse Fourier transform.

Recall that $\Z[i]$ is a Euclidean domain with four unit elements $\pm1, \pm i$. For $a,b,c \in \Z[i]$ we write $(a,b)=c$ to indicate that $c$ is the greatest in norm common divisor of $a$ and $b$ up to a unitary element. The distance of $\beta\in \mathbb{C}$ to the closest point
$n\in \Z[i]$ is denoted $\|\beta\|$. Note that this means that
$\| \beta \| \in [0,1/\sqrt 2]$. We maintain the notation established in \cite{gklmr2023}, where 
\begin{equation}\label{eq-definBq}
    B_q := \{x \in \Z[i] \,:\, 0 \leq \langle x , q\rangle < \N(q) \ \ \text{and} \ \ 0 \leq \langle x , iq\rangle < \N(q) \}.
\end{equation}
Note that for each $q\in \Z[i]$,  $B_q$ creates a tessellation of the complex integer lattice as shown in Figure \ref{fig-Bqexample}.
\begin{figure}
    \centering
    \begin{tikzpicture}[scale=0.7]
        \draw[->, black] (-1,0) -- (3,0);
        \draw[->, black] (0,-0.5) -- (0,6);
        \draw[MidnightBlue, very thick]  (0,0) -- (4,2) ;
        \draw[MidnightBlue, very thick]  (0,0) -- (-2,4) ;
        \draw[dashed,MidnightBlue, very thick]  (4,2) -- (2,6) -- (-2,4) ;
        \draw[black] (4,2) circle (2pt) node[anchor=west]{$q$};
        \draw[black] (-2,4) circle (2pt) node[anchor=east]{$iq$};
        \draw[black] (2,6) circle (2pt) node[anchor=west]{$(1+i)q$};
        \draw[black] (0,5) circle (2pt);
        \draw[black] (3,4) circle (2pt);
        \filldraw[black] (-1,2) circle (2pt);
        \filldraw[black] (-1,3) circle (2pt);
        \filldraw[black] (-1,4) circle (2pt);
        \filldraw[black] (0,0) circle (2pt);
        \filldraw[black] (0,1) circle (2pt);
        \filldraw[black] (0,2) circle (2pt);
        \filldraw[black] (0,3) circle (2pt);
        \filldraw[black] (0,4) circle (2pt);
        \filldraw[black] (1,1) circle (2pt);
        \filldraw[black] (1,2) circle (2pt);
        \filldraw[black] (1,3) circle (2pt);
        \filldraw[black] (1,4) circle (2pt);
        \filldraw[black] (1,5) circle (2pt);
        \filldraw[black] (2,1) circle (2pt); 
        \filldraw[black] (2,2) circle (2pt);
        \filldraw[black] (2,3) circle (2pt);
        \filldraw[black] (2,4) circle (2pt);
        \filldraw[black] (2,5) circle (2pt);
        \filldraw[black] (3,2) circle (2pt);
        \filldraw[black] (3,3) circle (2pt);
    \end{tikzpicture}
    \begin{tikzpicture}[scale=0.7]
        \draw[->, black] (-4,0) -- (8,0);
        \draw[->, black] (0,-0.5) -- (0,10.5);
        \begin{scope}
            \draw[MidnightBlue, very thick]  (0,0) -- (4,2) ;
            \draw[MidnightBlue, very thick]  (0,0) -- (-2,4) ;
            \draw[dashed,MidnightBlue, very thick]  (4,2) -- (2,6) -- (-2,4) ;
            \draw[black] (4,2) circle (2pt) node[anchor=north west]{$q$};
            \draw[black] (-2,4) circle (2pt) node[anchor=east]{$iq$};
            \draw[black] (2,6) circle (2pt);
            \draw[black] (0,5) circle (2pt);
            \draw[black] (3,4) circle (2pt);
            \filldraw[black] (-1,2) circle (2pt);
            \filldraw[black] (-1,3) circle (2pt);
            \filldraw[black] (-1,4) circle (2pt);
            \filldraw[black] (0,0) circle (2pt);
            \filldraw[black] (0,1) circle (2pt);
            \filldraw[black] (0,2) circle (2pt);
            \filldraw[black] (0,3) circle (2pt);
            \filldraw[black] (0,4) circle (2pt);
            \filldraw[black] (1,1) circle (2pt);
            \filldraw[black] (1,2) circle (2pt);
            \filldraw[black] (1,3) circle (2pt);
            \filldraw[black] (1,4) circle (2pt);
            \filldraw[black] (1,5) circle (2pt);
            \filldraw[black] (2,1) circle (2pt); 
            \filldraw[black] (2,2) circle (2pt);
            \filldraw[black] (2,3) circle (2pt);
            \filldraw[black] (2,4) circle (2pt);
            \filldraw[black] (2,5) circle (2pt);
            \filldraw[black] (3,2) circle (2pt);
            \filldraw[black] (3,3) circle (2pt);
        \end{scope}
        \begin{scope}[shift={(4,2)}]
            \draw[MidnightBlue, very thick]  (0,0) -- (4,2) ;
            \draw[MidnightBlue, very thick]  (0,0) -- (-2,4) ;
            \draw[dashed,MidnightBlue, very thick]  (4,2) -- (2,6) -- (-2,4) ;
            \draw[black] (4,2) circle (2pt) node[anchor=west]{$2q$};
            \draw[black] (-2,4) circle (2pt) node[anchor=south]{$(1+i)q$};
            \draw[black] (2,6) circle (2pt) node[anchor=south]{$(2+i)q$};
            \draw[black] (0,5) circle (2pt);
            \draw[black] (3,4) circle (2pt);
            \filldraw[black] (-1,2) circle (2pt);
            \filldraw[black] (-1,3) circle (2pt);
            \filldraw[black] (-1,4) circle (2pt);
            \filldraw[black] (0,0) circle (2pt);
            \filldraw[black] (0,1) circle (2pt);
            \filldraw[black] (0,2) circle (2pt);
            \filldraw[black] (0,3) circle (2pt);
            \filldraw[black] (0,4) circle (2pt);
            \filldraw[black] (1,1) circle (2pt);
            \filldraw[black] (1,2) circle (2pt);
            \filldraw[black] (1,3) circle (2pt);
            \filldraw[black] (1,4) circle (2pt);
            \filldraw[black] (1,5) circle (2pt);
            \filldraw[black] (2,1) circle (2pt); 
            \filldraw[black] (2,2) circle (2pt);
            \filldraw[black] (2,3) circle (2pt);
            \filldraw[black] (2,4) circle (2pt);
            \filldraw[black] (2,5) circle (2pt);
            \filldraw[black] (3,2) circle (2pt);
            \filldraw[black] (3,3) circle (2pt);
        \end{scope}
        \begin{scope}[shift={(-2,4)}]
            \draw[MidnightBlue, very thick]  (0,0) -- (4,2) ;
            \draw[MidnightBlue, very thick]  (0,0) -- (-2,4) ;
            \draw[dashed,MidnightBlue, very thick]  (4,2) -- (2,6) -- (-2,4) ;
            \draw[black] (4,2) circle (2pt);
            \draw[black] (-2,4) circle (2pt) node[anchor=east]{$2iq$};
            \draw[black] (2,6) circle (2pt) node[anchor=west]{$(1+2i)q$};
            \draw[black] (0,5) circle (2pt);
            \draw[black] (3,4) circle (2pt);
            \filldraw[black] (-1,2) circle (2pt);
            \filldraw[black] (-1,3) circle (2pt);
            \filldraw[black] (-1,4) circle (2pt);
            \filldraw[black] (0,0) circle (2pt);
            \filldraw[black] (0,1) circle (2pt);
            \filldraw[black] (0,2) circle (2pt);
            \filldraw[black] (0,3) circle (2pt);
            \filldraw[black] (0,4) circle (2pt);
            \filldraw[black] (1,1) circle (2pt);
            \filldraw[black] (1,2) circle (2pt);
            \filldraw[black] (1,3) circle (2pt);
            \filldraw[black] (1,4) circle (2pt);
            \filldraw[black] (1,5) circle (2pt);
            \filldraw[black] (2,1) circle (2pt); 
            \filldraw[black] (2,2) circle (2pt);
            \filldraw[black] (2,3) circle (2pt);
            \filldraw[black] (2,4) circle (2pt);
            \filldraw[black] (2,5) circle (2pt);
            \filldraw[black] (3,2) circle (2pt);
            \filldraw[black] (3,3) circle (2pt);
        \end{scope}
    \end{tikzpicture}
    \caption{\small Tessallation of the plane by $B_q$}
    \label{fig-Bqexample}
\end{figure}
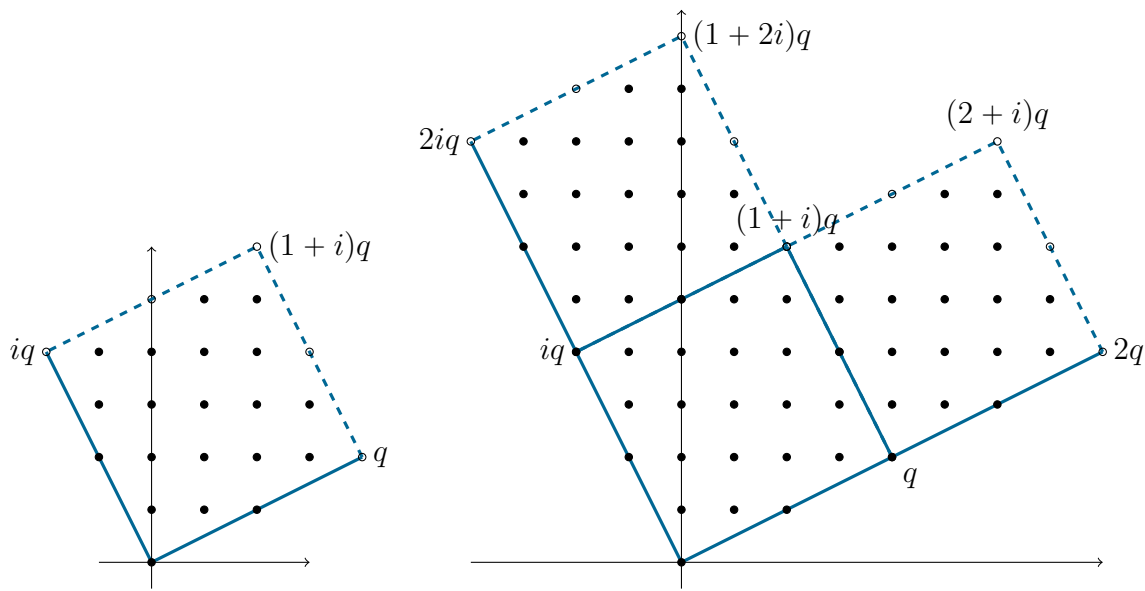
Moreover, the set of elements that are co-prime with $q$ is denoted by
\begin{equation}
    \A_q : =  \{ r \in B_q \, : \, (r,q)=1 \}.
\end{equation}

For a positive integer $k$, let $r_2(k)$ denote the number of representations
of $k$ as the sum of two squares. Clearly, $r_2(k)=|\{n\in\Z[i]: 
\N(n)=k\}|$. It is well-known that $r_2(k)$ obeys the following
asymtptotics:
\begin{equation}
\sum_{k=1}^N r_2(k) = \pi N + O(N^{1/2}),
\qquad
\sum_{k=1}^N \frac{r_2(k)}{k} = \pi(\log N + \kappa) + O(N^{-1/2}),
\label{eq:sum-of-squares}
\end{equation}
where $K=\pi\kappa$ is Sierpi\'{n}ski's constant.
Clearly the first sum above is the number of lattice point in $Z[i]$ of norm at most $N$. It is easy to see that an analogous asymptotic
holds for the number of lattice points in a sector. To be precise, let
$\omega\subseteq [0,2\pi]$ be an interval and let $\Gamma_N(\omega)$
be the sector
$$
\Gamma_N(\omega) = \{z: \N(z)\leq N, \; \arg(z)\in\omega\}.
$$
Then the number of lattice points in $\Gamma_N(\omega)$ is
$$
\Big| \Z[i]\cap \Gamma_N(\omega) \Big|
=
\frac{|\omega|}{2} N + O(N^{1/2}).
$$
We also need the following estimate for exponential sums on a sector:

\begin{lemma}\label{lem:lattice}
Let $\omega\subseteq[0,2\pi]$ be a an interval. Then for all $N\geq1$
we have
\begin{equation}
\frac{1}{2\pi}\int_0^{2\pi} \Abs{\sum_{n\in \Gamma_N(\omega + t)}
e(\jap{n,\alpha})} \;dt
\ls
N^{1/2} + N^{1/4}\|\alpha\|^{-3/2}.
\end{equation}
The implied constant is independent of $\omega$.
\end{lemma}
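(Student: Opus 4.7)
The plan is to approximate $F(t) := \sum_{n \in \Gamma_N(\omega+t)} e(\jap{n, \alpha})$ by a continuous Fourier-type integral, bound that integral via Stokes' theorem and stationary phase, and then use the average over $t$ to tame the boundary contributions. Throughout, let $\alpha_0 \in \C$ denote the representative of $\alpha \in \T^2 = \C/\Z[i]$ with $|\alpha_0| = \|\alpha\|$, so that $e(\jap{n,\alpha}) = e(\jap{n, \alpha_0})$ for every $n \in \Z[i]$.

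First, I would use a standard smoothing plus Poisson summation argument to show
\begin{equation*}
F(t) = \int_{\Gamma_N(\omega+t)} e(-\jap{z, \alpha_0})\, dz + O(\sqrt{N}),
\end{equation*}
uniformly in $t$. The error is controlled by the number of lattice points within unit distance of the boundary of $\Gamma_N(\omega+t)$, itself $\lesssim \sqrt{N}$ by the perimeter estimate; this will account for the $N^{1/2}$ term in the bound.

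Next, applying Stokes' theorem to the identity $e(-\jap{z,\alpha_0}) = (-2\pi i \|\alpha\|^2)^{-1}\, \nabla \cdot (\alpha_0 \, e(-\jap{z,\alpha_0}))$ rewrites the remaining 2D integral over $\Gamma_N(\omega+t)$ as a 1D line integral along its boundary, which decomposes into an arc of the circle $|z| = \sqrt{N}$ and two radial segments. On the arc, the phase $\sqrt{N}\|\alpha\|\cos(\theta - \arg\alpha_0)$ has stationary points at $\theta = \arg\alpha_0$ and $\theta = \arg\alpha_0 + \pi$; a stationary phase analysis will bound the arc contribution by $\lesssim N^{1/4}\|\alpha\|^{-3/2}$ whenever such a point lies in $[a+t, b+t]$, and by smaller quantities via integration by parts otherwise. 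This gives the main $N^{1/4}\|\alpha\|^{-3/2}$ term already at the pointwise level in $t$.

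The main obstacle will lie in the two radial segments, whose contributions can pointwise be as large as $\sqrt{N}/\|\alpha\|$ in the degenerate case that a radial is nearly perpendicular to $\alpha_0$. However, that configuration occurs only for $t$ in a set of measure $\lesssim (\sqrt{N}\|\alpha\|)^{-1}$, so integrating the pointwise estimate over $t \in [0, 2\pi]$ will produce a total radial contribution of order $\|\alpha\|^{-2}\log(\sqrt{N}\|\alpha\|)$. A short case analysis in $\|\alpha\|$ shows this is always dominated by $N^{1/2} + N^{1/4}\|\alpha\|^{-3/2}$ for $\|\alpha\| \in (0, 1/\sqrt{2}]$, the logarithmic loss being absorbed into the $N^{1/2}$ term when $\|\alpha\| \gtrsim N^{-1/4}$ and into the $N^{1/4}\|\alpha\|^{-3/2}$ term otherwise. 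This $t$-averaging is the essential step and explains the $L^1$-averaged formulation of the lemma.
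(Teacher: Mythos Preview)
Your proposal is correct and follows essentially the same approach as the paper: approximate the lattice sum by the continuous integral over the sector with $O(N^{1/2})$ error, apply the divergence theorem, bound the arc contribution via stationary phase (the paper phrases this as a Bessel-function estimate, which is the same thing), and control the radial segments only after averaging in $t$. One small correction to your final case analysis: when $\|\alpha\|\gtrsim N^{-1/4}$ the radial contribution $\|\alpha\|^{-2}\log(\sqrt{N}\|\alpha\|)$ is not quite $\lesssim N^{1/2}$, but it \emph{is} always $\lesssim N^{1/4}\|\alpha\|^{-3/2}$ once $\sqrt{N}\|\alpha\|\geq 1$ (since $\log x\lesssim x^{1/2}$), so the absorption goes into the arc term rather than the $N^{1/2}$ term---exactly as in the paper.
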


\begin{proof}
We can suppose that $\N(\alpha)=\|\alpha\|^2\leq 1/2$.
For $n\in\Z[i]$, let $I_n$ be the square in $\C$ centred at $n$ with
side length $1$. Observe that
\begin{align}
    \sum_{n\in \Gamma_N(\omega)} \int_{I_n} e(\jap{\xi,\alpha})\;d\xi
    & = 
    \int_{I_0} e(\jap{\xi,\alpha})\;d\xi
    \sum_{n\in \Gamma_N(\omega)} e(\jap{n,\alpha}) \\ 
    & = \wc{\one_{I_0}}(\alpha)\sum_{n\in \Gamma_N(\omega)} e(\jap{n,\alpha}).
\end{align}
Note that $\wc{\one_{I_0}}(\alpha)\simeq 1$ for $\N(\alpha)\leq1/2$.
Also, the measure of the symmetric difference of $\Gamma_N(\omega)$
and $\bigcup_{n\in \Gamma_N(\omega)}I_n$ is $O(N^{1/2})$. It follows that 
\begin{align}
\Abs{\sum_{n\in \Gamma_N(\omega + t)} e(\jap{n,\alpha})}
&\simeq
\Abs{\sum_{n\in \Gamma_N(\omega + t)} \int_{I_n} e(\jap{\xi,\alpha})\;d\xi} \\
&\leq
\Abs{\int_{\Gamma_N(\omega + t)} e(\jap{\xi,\alpha})\;d\xi} + O(N^{1/2}) \\
&=
N\Abs{\int_{\Gamma_1(\omega + t)} e(\jap{\xi,N^{1/2}\alpha})\;d\xi} + O(N^{1/2}).
\end{align}
Moreover, for each $\beta\in\C$,
$$
\frac{1}{2\pi}\int_0^{2\pi} \int_{\Gamma_1(\omega + t)} e(\jap{\xi,\beta})\;d\xi\;dt
=
\int_0^1 \int_{\Gamma_1(\omega)} e(\jap{\xi,\beta_t})\;d\xi\;dt,
$$
where $\beta_t=|\beta|e(t)$, and so it suffices to show that the last
expression is $O(\N(\beta)^{-3/4})$. Indeed, if this is the case, then
by the calculations above we would have
\begin{align}
    \frac{1}{2\pi}\int_0^{2\pi} \Abs{\sum_{n\in \Gamma_N(\omega + t)}
    e(\jap{n,\alpha})} \;dt
    & \ls N^{1/2} + N\N\left(N^{1/2}\alpha\right)^{-3/4}\\
    & = N^{1/2} + N^{1/4}\|\alpha\|^{-3/2}.
\end{align}

Let $\gamma$ be the boundary of $\Gamma_1(\omega)$. Then by the divergence theorem
\begin{equation}
\int_{\Gamma_1(\omega)} e(\jap{\xi,\beta_t})\;d\xi
=
\frac{i}{2\pi \N(\beta)} \int_\gamma
e(\jap{\xi,\beta_t}) \jap{\nu(\xi),\beta_t} \; d\sigma(\xi),
\label{eq:divergence}
\end{equation}
where $\nu(x)$ is the outward unit normal at $\xi$ and $d\sigma$ is the arc length
measure on $\gamma$. Observe that $\gamma$ is a disjoint union
$\gamma_1\sqcup\gamma_2\sqcup\gamma_3$, where $\gamma_1$ and $\gamma_2$ are
straight lines from $0$ and $\gamma_3$ is an arc of the unit circle. We
can suppose $\gamma_1$ is the interval $[0,1]$. Then
\begin{align}
    \Abs{\int_{\gamma_1} e(\jap{\xi,\beta_t}) \jap{\nu(\xi),\beta_t} \; d\sigma(\xi)}
    & \leq |\beta| \Abs{\int_0^1 e(x|\beta|\cos(2\pi t)) \;dx}\\
    & \ls |\beta| \Abs{\frac{\sin(\pi|\beta|\cos(2\pi t))}{\pi|\beta|\cos(2\pi t)}}.
\end{align}
Clearly the same is true for $\gamma_2$ and so
\begin{equation}
\int_0^1 \left|\int_{\gamma_1\cup\gamma_2}
e(\jap{\xi,\beta_t}) \jap{\nu(\xi),\beta_t} \; d\sigma(\xi) \right| \;dt
\ls
|\beta|\int_0^1
\Abs{\frac{\sin(\pi|\beta|\cos(2\pi t))}{\pi|\beta|\cos(2\pi t)}} \;dt
\ls
\log|\beta|.
\label{eq:line}
\end{equation}

We now consider the integral over $\gamma_3$.
For $\xi\in\gamma_3$, write $\xi=e(\theta)$ with $2\pi\theta\in\omega$. Then
$\nu(\xi)=\xi$ and $\jap{\xi,\beta_t}=|\beta|\cos(2\pi(\theta-t))$.
It follows that
\begin{equation}
\int_{\gamma_3}
e(\jap{\xi,\beta_t}) \jap{\nu(\xi),\beta_t} \; d\sigma(\xi)
=
|\beta| \int_{\omega+t}
e(|\beta|\cos2\pi\theta) \cos2\pi\theta\; d\theta.
\label{eq:arc}
\end{equation}
By splitting the integral on the right into integrals over subintervals
of $\omega+t$ on which $\cos2\pi\theta$ is monotonic and using the change
of variable $2\pi u=\cos2\pi\theta$,
we see that the right hand side is controlled by a sum of at most 3 terms of the form
$$
|\beta|\left|\int_a^b e(|\beta|u)\frac{u}{\sqrt{1-u^2}}\;du \right|,
$$
with $a,b\in[-1,1]$. This is a constant multiple of
$|\beta||(\wc{\one}_{[a,b]}\ast J_1)(|\beta|)|$, where $J_1$ is a Bessel function of the
first kind. Since $\wc{\one}_{[a,b]}(x)=O(x^{-1})$, the implied constant being independent
of $a$ and $b$, and $J_1(x)=O(x^{-1/2})$, we conclude that \eqref{eq:arc} is
$O(|\beta|^{1/2})$. Combining this with \eqref{eq:divergence} and
\eqref{eq:line} then gives the result.
\end{proof}

\begin{rmk}\label{rmk:expsum}
Taking $\omega=[0,\pi]$ in Lemma \ref{lem:lattice} we get the estimate
$$
\Abs{\sum_{\N(n)\leq N}
e(\jap{n,\alpha})}
\ls
\min\left\{N, \;N^{1/2} + N^{1/4}\N(\alpha)^{-3/4}\right\}.
$$
Then by summation by parts, it is not difficult to see that
\begin{align}
\Abs{\sum_{\N(n)\leq N}\log\N(n) e(\jap{n,\alpha})}
&\ls
\log N\Abs{\sum_{\N(n)\leq N} e(\jap{n,\alpha})}
+ \Abs{\sum_{k=1}^N \frac{1}{k} \sum_{\N(n)\leq k} e(\jap{n,\alpha})} \\
&\ls
\min\left\{N, \; (N^{1/2} + N^{1/4}\N(\alpha)^{-3/4})\log N \,\right\}.
\end{align}
\end{rmk}

\bigskip

\section{Exponential sums with the divisor function in $\Z[i]$}
\label{sec:expsum}

Throughout this section, we set
$$ S_N = D(N)A_N = \sum_{\N(n)\leq N} d(n)\; \one_n. $$
Let $\omega\subseteq[0,2\pi]$
be an interval and $0\leq t <2\pi$. It will also be convenient to define
$$ S_{N,t}^\omega = \sum_{\substack{\N(n)\leq N\\\arg(n)\in\omega+t}} d(n)\; \delta_n. $$

\subsection{Rational frequencies}
We begin by using the Dirichlet hyperbola method to evaluate
$\wh{S_{N,t}^{\omega}}$ at rational points.

\begin{lemma}\label{lem:rationals}
Let $\omega\subseteq[0,2\pi]$ be an interval and $0\leq t <2\pi$. For
each $a,q\in\Z[i]$ with $q\neq0$ and $a\in\A_q$, and each integer $N\geq1$,
\begin{equation}\label{e:rationals}
\wh{S_{N,t}^{\omega}}(a/q)
= \frac{|\omega|\pi N}{2\N(q)}\Big(\log N - 2\log \N(q) + 2\kappa -1 \Big)
+ E_N^\omega(a/q,t),
\end{equation}
where $E_N^\omega(a/q,t)$ satisfies
$$
\frac{1}{2\pi}\int_0^{2\pi} \left|E_N^\omega(a/q,t)\right| \;dt 
\ls (N^{3/4} + N^{1/4}\N(q)^{3/4})\log(1+\N(q)).
$$
The implied constant is independent of $\omega$.
\end{lemma}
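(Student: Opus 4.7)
\emph{Proof plan.} Apply the Dirichlet hyperbola method. Writing $d(n)=\sum_{d_1d_2=n}1$ (over ordered pairs in $\Z[i]\setminus\{0\}$) and using $\jap{d_1d_2,a/q}=\jap{d_2,\bar d_1 a/q}$, decompose
\begin{align}
\wh{S_{N,t}^\omega}(a/q)
&=2\sum_{\N(d)\le\sqrt N}\;\sum_{m\in \Gamma_{N/\N(d)}(\omega+t-\arg d)}e(-\jap{m,\bar d a/q})\\
&\quad-\sum_{\substack{\N(d),\N(m)\le\sqrt N\\\arg(dm)\in\omega+t}}e(-\jap{dm,a/q}).
\end{align}
In each double sum, split the outer $d$-sum by whether $\bar q\mid d$ (\emph{trivial}: $\bar d a/q\in\Z[i]$, so the exponential equals $1$) or $\bar q\nmid d$ (\emph{non-trivial}).

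For the trivial pieces, substitute $d=\bar q d'$ and combine the sector lattice asymptotic $|\Gamma_M(\omega')\cap\Z[i]|=\tfrac{|\omega|}{2}M+O(M^{1/2})$ with $\sum_{\N(d')\le M}\N(d')^{-1}=\pi\log M+\pi\kappa+O(M^{-1/2})$ (from \eqref{eq:sum-of-squares}). Direct bookkeeping produces the asserted main term $\frac{|\omega|\pi N}{2\N(q)}\bigl(\log N-2\log\N(q)+2\kappa-1\bigr)$, with residual of size $O(N^{3/4}/\N(q)^{1/2})$ absorbed into $E_N^\omega(a/q,t)$.

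For the non-trivial pieces, let $r:=\bar d a\bmod q\in B_q\setminus\{0\}$, so that $\bar d a/q\equiv r/q\pmod{\Z[i]}$ and $\|\bar d a/q\|=\|r/q\|\ge 1/|q|$. After the shift $t\mapsto t-\arg d$, Lemma~\ref{lem:lattice} bounds the $L^1(dt)$-average of the inner $m$-sum by $\ls(N/\N(d))^{1/2}+(N/\N(d))^{1/4}\|r/q\|^{-3/2}$. Summing the first term over $\N(d)\le\sqrt N$ gives $\ls N^{3/4}$; the corresponding piece of the non-trivial diagonal is handled identically.

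The main obstacle is to show $\sum_{\bar q\nmid d,\ \N(d)\le\sqrt N}(N/\N(d))^{1/4}\|r/q\|^{-3/2}\ls N^{1/4}\N(q)^{3/4}\log(1+\N(q))$. The uniform bound $\|r/q\|^{-3/2}\le\N(q)^{3/4}$ is insufficient in the regime $\N(q)>\sqrt N$; instead, decompose dyadically $\N(d)\sim D$. When $D\ge\N(q)$, the map $d\mapsto r$ has uniform multiplicity $\sim D/\N(q)$, and the aggregate $\sum_{r\in B_q\setminus\{0\}}\|r/q\|^{-3/2}\sim\N(q)$ (obtained by isolating the four corners of the unit square $[0,1]^2$ under the bijection $r\mapsto r/q$) yields a per-level contribution $\ls N^{1/4}D^{3/4}$, hence $\ls N^{5/8}$ in total. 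When $D<\N(q)$, the $\sim D$ values of $d$ hit distinct residue classes with multiplicity $O(1)$, so the sum is bounded by the $D$ largest values of $\|r/q\|^{-3/2}$---those $r$ within $O(\sqrt D)$ of a corner of $B_q$---which total $\ls\N(q)^{3/4}D^{1/4}$, giving a per-level contribution $N^{1/4}\N(q)^{3/4}$ and thus the claimed bound after summing the $O(\log(1+\N(q)))$ dyadic levels. The non-trivial diagonal is estimated the same way.
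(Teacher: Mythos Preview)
Your proof is correct and shares the paper's overall architecture: the Dirichlet hyperbola decomposition, the split into trivial ($\bar q\mid d$) versus non-trivial outer summands, and the use of Lemma~\ref{lem:lattice} to control the inner sector exponential sums on average in $t$. The main-term extraction and the $O(N^{3/4})$ contributions are obtained in the same way.

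The difference lies in how the critical sum $\sum_{\bar q\nmid d,\ \N(d)\le\sqrt N}(N/\N(d))^{1/4}\|r/q\|^{-3/2}$ is bounded. The paper separates the cases $\N(q)\le\sqrt N$ and $\N(q)>\sqrt N$. In the first it simply uses $\N(d)\ge 1\ge\|r/q\|^2$ to replace $\N(d)^{-1/4}\|r/q\|^{-3/2}$ by $\|r/q\|^{-2}$, then sums $\sum_{s\in B_q\setminus\{0\}}\N(s)^{-1}\ls\log(1+\N(q))$ after accounting for the multiplicity $\sim\sqrt N/\N(q)$ of each residue class. In the second case each residue class is hit $O(1)$ times, and H\"older's inequality combines $\N(s)^{-1/4}$ with $\|sa/q\|^{-3/2}$. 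Your dyadic decomposition in $D\sim\N(d)$ together with the ``sum of the $D$ largest values'' estimate handles both regimes uniformly and avoids the H\"older step; the paper's Case~1 argument is slightly slicker in that regime, but your route is more self-contained. Both yield the same final bound.

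One minor point you should make explicit: when $\N(q)>\sqrt N$ the trivial range is empty, so your bookkeeping produces zero rather than the asserted main term; that main term is then $O(N^{1/2}\log(1+\N(q)))$ and is absorbed into $N^{1/4}\N(q)^{3/4}\log(1+\N(q))$. The paper records this step separately.
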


\begin{rmk}\label{rmk:rationals}
Inspection of the proof shows that in the case that $\N(q)=1$ and $a=0$,
the error estimate above holds for all $t$ uniformly rather than just on average. In particular, for any interval $\omega$ we have the formula
$$
\sum_{\substack{\N(n)\leq N\\\arg(n)\in\omega}} d(n)
=
\frac{|\omega|\pi N}{2}\left(\log N + 2\kappa -1 \right)
+
O(N^{3/4}).
$$
\end{rmk}

\begin{proof}
For each $n\in\Z[i]\setminus\{0\}$ and $t\in\T$, set
\begin{align}
    D_1(n,t) & = \sum_{\substack{1\leq \N(m)\leq N/\N(n)\\ \arg(mn)\in\omega+t}} e(\jap{mn,a/q}), \\
    D_2(n,t) & = \sum_{\substack{1\leq \N(m)\leq \sqrt{N}\\ \arg(mn)\in\omega+t}} e(\jap{mn,a/q}),
\end{align}
so that 
\begin{equation}
   \wh{S_{N,t}^{\omega}}(a/q)
        = \sum_{\substack{\N(mn)\leq N\\ \arg(mn)\in\omega+t}}
        e(\jap{mn,a/q})
        = \sum_{1\leq \N(n)\leq \sqrt{N}} (2D_1(n,t) - D_2(n,t)).
\end{equation}

We consider the cases $\N(q)\leq\sqrt{N}$ and $\N(q)>\sqrt{N}$ separately.

\textbf{Case 1:} $\N(q)\leq\sqrt{N}$.
If $n\neq0$ is a multiple of $\wb{q}$, then
\begin{align}
    D_1(n,t) & = \frac{|\omega| N}{2\N(n)}
        + O\left(\frac{N^{1/2}}{\N(n)^{1/2}}\right).
\end{align}
Then summing each term on the right over all such $n$, using
\eqref{eq:sum-of-squares}, and applying Cauchy-Schwartz, we get
\begin{align}
    \sum_{\substack{1\leq \N(n)\leq \sqrt{N} \\ n\equiv 0\mod \wb{q}}} \frac{|\omega| N}{2\N(n)}
    & = \frac{|\omega| N}{2\N(q)} \sum_{k=1}^{\sqrt{N}/\N(q)} \frac{r_2(k)}{k}\\
    & = \frac{|\omega|\pi N}{4\N(q)}\left(\log N - 2\log \N(q) + 2\kappa\right) + O(N^{-1/2}),
\intertext{and}
    \sum_{\substack{1\leq \N(n)\leq \sqrt{N} \\ n\equiv 0\mod \wb{q}}} \frac{N^{1/2}}{\N(n)^{1/2}}
    & \leq \sum_{1\leq \N(n)\leq \sqrt{N}} \frac{N^{1/2}}{\N(n)^{1/2}}\\
    & \leq \sum_{1\leq \N(n)\leq \sqrt{N}} \frac{N^{1/2}}{\N(n)^{1/2}}
    \ls N^{3/4}.
\end{align}

Combining these we find that
\begin{align}
    \sum_{\substack{1\leq \N(n)\leq \sqrt{N} \\ n\equiv 0\mod \wb{q}}} D_1(n,t)
    & = \frac{|\omega|\pi N}{4\N(q)}\left(\log N - 2\log \N(q) + 2\kappa\right) + O(N^{3/4}).
\end{align}
The same reasoning also gives
\begin{align}
    \sum_{\substack{1\leq \N(n)\leq \sqrt{N} \\ k\equiv 0\mod \wb{q}}} D_2(n,t)
    & = \frac{|\omega|\pi N}{2\N(q)} + O(N^{3/4}),
\end{align}
and hence
$$
E_N^\omega(a/q,t)
=
\sum_{\substack{1\leq \N(n)\leq \sqrt{N} \\ n\notequiv 0\mod \wb{q}}}
(2D_1(n,t) - D_2(n,t))
+ O(N^{3/4}).
$$

If $\wb{n}\equiv s \mod q$ for some $s\in B_{q}\setminus\{0\}$, then 
$ \N(n) \gs  \|sa/q\|^2$. Then by Lemma \ref{lem:lattice},
\begin{align}
\frac{1}{2\pi}\int_0^{2\pi} \left| D_1(n,t) \right| \;dt
& =   \frac{1}{2\pi}\int_0^{2\pi}
\left| \sum_{\substack{1\leq \N(m)\leq N/\N(n)\\ \arg(mn)\in\omega+t}}
e(\jap{m,sa/q}) \right| \;dt \\
&\ls
\frac{N^{1/2}}{\N(n)^{1/2}}
+ \frac{N^{1/4}}{\N(n)^{1/4}\|sa/q\|^{3/2}} \\
&\leq 
\frac{N^{1/2}}{\N(n)^{1/2}} + \frac{N^{1/4}}{\|sa/q\|^2}.
\end{align}
As before, we have that
\begin{align}
    \sum_{\substack{1\leq \N(n)\leq \sqrt{N} \\ \wb{n}\notequiv 0\mod q}} \frac{N^{1/2}}{\N(n)^{1/2}}
       & \ls N^{3/4}.
\end{align}
Note that the map $s\mapsto sa \mod q$ is a bijection of $B_q$, and so
\begin{align}
    \sum_{s\in B_q\setminus\{0\}} 
    \sum_{\substack{1\leq \N(n)\leq \sqrt{N} \\ \wb{n}\equiv s\mod q}}
    \frac{N^{1/4}}{\|sa/q\|^2}
         &\ls \sum_{s\in B_q\setminus\{0\}} \frac{N^{3/4}}{\N(q)\|sa/q\|^2} \\
         &\leq \sum_{s\in B_q\setminus\{0\}} \frac{N^{3/4}}{\N(s)} 
        \ls N^{3/4}\log(1+\N(q)).
\end{align}    

We conclude that
$$
\sum_{\substack{1\leq \N(n)\leq \sqrt{N} \\ n\notequiv 0\mod \wb{q}}}
\frac{1}{2\pi}\int_0^{2\pi} \left| D_1(n,t) \right|\;dt
\ls
N^{3/4}\log(1+\N(q)).
$$
The same computations give an analogous bound for $D_2(n,t)$,
which concludes the proof in the case that $\N(q)\geq\sqrt{N}$.

\textbf{Case 2:} $\N(q)>\sqrt{N}$.
In this case there are no non-zero $n$ such that $\N(n)\leq\sqrt{N}$ and $n$ is a multiple of $\wb{q}$. However,
\begin{align}
    \Abs{\frac{|\omega|\pi N}{4\N(q)}\left(\log N - 2\log \N(q) + 2\kappa-1\right)}
        & \ls N^{1/2}\log(1+\N(q)),
\label{eq:large-q}
\end{align}
and so
$$
E_N^\omega(a/q,t) = \wh{S_{N,t}^{\omega}}(a/q) +O(N^{1/2}\log(1+\N(q)))
$$

Observe that for each $s\in B_q$ the number of $n$ such that
$\N(n)\leq\sqrt{N}$ and $\wb{n}\equiv s\mod q$ is at most 4. It follows from
this and H\"{o}lder's inequality that
\begin{align*}
\sum_{1\leq \N(n)\leq \sqrt{N}}  \frac{1}{2\pi}\int_0^{2\pi}
\left | D_1(n,t) \right| \;dt 
	&\ls
    \sum_{s\in B_q\setminus\{0\}}
    \sum_{\substack{1\leq \N(n)\leq \sqrt{N} \\ \wb{n}\equiv s\mod q}}
    \frac{N^{1/4}}{\N(n)^{1/4}\|sa/q\|^{3/2}} \\
        &\ls \sum_{s\in B_q\setminus\{0\}} \frac{N^{1/4}}{\N(s)^{1/4}\|sa/q\|^{3/2}} \\
        &\ls N^{1/4} \log(1+\N(q))^{1/4} \left(\sum_{s\in B_q\setminus\{0\}} \frac{1}{\|sa/q\|^2}\right)^\frac{3}{4}\\
        & \ls N^{1/4}\N(q)^{3/4}\log(1+\N(q)).
\end{align*}
The same holds for $D_2(n,t)$. Combining these completes the proof.
\end{proof}

\medskip 
\subsection{Major and minor arcs}\label{sec:circlemethod}

Let $\delta>0$ be a small constant and let $a,q\in\Z[i]$ be
such that $q\neq0$ and $a\in\A_q$. Then for
$N\geq1$, we define
\begin{equation}\label{e:majorarcsdefinition}
\mathfrak{M}_{N,\delta}(a/q)
=
\left\{ \alpha\in \T^2 \, :\,
\N(\alpha-a/q) \leq N^{-1+\delta/2}\N(q)^{-1} \right\}.
\end{equation}

The \emph{major arcs} $\mathfrak{M}_{N,\delta}$ consist of
$\mathfrak{M}_{N,\delta}(a/q)$ with $\N(q)\leq N^\delta$ and the
\emph{minor arcs}  $\mathfrak{m}_{N,\delta}$ are the complement of
the major arcs:
$$
\mathfrak{M}_{N,\delta}=\bigcup_{\N(q)\leq N^\delta\,}
\bigcup_{\;a\in \A_q}\mathfrak{M}_{N,\delta}(a/q),
\qquad
\mathfrak{m}_{N,\delta}
= \T^2\setminus\mathfrak{M}_{N,\delta}.
$$
The next lemma describes the behaviour of $\wh{S_N}$ on major arcs.

\begin{prop}\label{p:themajorarcgauss}
Let $0<\delta\leq 1/20$ and $N\geq1$. For each $a,q\in\Z[i]$ such that
$1\leq\N(q)\leq N^\delta$ and $a\in\A_q$, and each
$\alpha\in\mathfrak{M}_{N,\delta}(a/q)$ we have

\begin{equation}\label{e:themajorarcgauss}
\wh{S_N}(\alpha)
= \sum_{\substack{\N(n)\leq N}} \frac{\pi}{\N(q)}
\left(\log \N(n) -2\log{\N(q)} +2\kappa\right)
e(\jap{n, \alpha-a/q}) + O(N^{1-\delta}).
\end{equation}
\end{prop}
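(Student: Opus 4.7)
The plan is to view $\wh{S_N}(\alpha)$ as a perturbation, around the rational $a/q$, of the exponential sums controlled by Lemma \ref{lem:rationals}, exploiting that on a major arc the phase $e(\jap{\cdot, -\xi})$ with $\xi = \alpha - a/q$ is slowly varying on the disk $\{\N(u) \leq N\}$. First I would write $\alpha = a/q + \xi$ (so that $\N(\xi) \leq N^{-1+\delta/2}\N(q)^{-1}$) and factor
$$\wh{S_N}(\alpha) = \sum_{\N(n)\leq N} d(n)\,e(\jap{n,-a/q})\,g(n), \qquad g(u) := e(\jap{u,-\xi}).$$
Since $|\xi| \ls N^{-1/2+\delta/4}\N(q)^{-1/2}$, $g$ oscillates by at most $O(N^{\delta/4})$ across the whole disk, but is essentially constant on sufficiently thin angular sub-sectors and sufficiently short radial intervals.

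Next, I would fix an angular resolution $\eta = N^{-\delta_0}$ (with $\delta_0 > 0$ to be chosen small), and for each offset $t \in [0,\eta)$ partition $[0,2\pi)$ into sectors $\omega_j + t = [j\eta + t, (j+1)\eta + t)$. On $\omega_j + t$, $g$ varies angularly by $O(\eta|\xi|\sqrt{N})$ and has radial derivative $|\partial_s g(\sqrt{s}e^{i\theta})| \ls |\xi|/\sqrt{s}$; radial Abel summation on the sector, combined with Lemma \ref{lem:rationals} for $\omega=[0,\eta)$, then yields the sector sum as the integral of $g$ against the smooth density $\tfrac{\eta\pi}{2\N(q)}(\log s - 2\log\N(q) + 2\kappa)\,ds$, up to an error involving $E_s^{[0,\eta)}(a/q,t)$ and the angular approximation of $g$. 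The left-hand side being independent of $t$, I would average the resulting identity over $t \in [0,\eta)$: after swapping sums and integrals, the error term becomes an average of $|E_s^{[0,\eta)}(a/q,\cdot)|$ over the full circle, which is exactly the quantity controlled by Lemma \ref{lem:rationals}.

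Summing the main terms over $j$ and passing to polar coordinates produces the continuous approximant
$$\int_{\N(u)\leq N} g(u)\,\frac{\pi}{\N(q)}(\log\N(u) - 2\log\N(q) + 2\kappa)\,du,$$
which I would compare with the desired sum in \eqref{e:themajorarcgauss} via an Euler--Maclaurin-style argument: the boundary of $\{\N(u)\leq N\}$ contributes $O(\sqrt{N}\log N/\N(q))$ and the variation of the smooth integrand across unit cells contributes $O(|\xi|N\log N/\N(q))$. Collecting the errors gives a bound of order $\eta N^{1+\delta/4}\log N/\N(q)^{1/2} + \eta^{-1}(N^{3/4+\delta/4} + N^{1/4+\delta/4}\N(q)^{1/4})\log N$, which is $O(N^{1-\delta})$ provided $\delta_0 \in (5\delta/4,\,1/4 - 5\delta/4)$, a nonempty interval when $\delta \leq 1/20$. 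The main obstacle is that Lemma \ref{lem:rationals} controls $E_s^\omega$ only on average over the angular offset, never pointwise; the averaging trick above is precisely what converts this average-type bound into pointwise control on a single value of $\wh{S_N}(\alpha)$, and the trade-off between it and the angular approximation error on $g$ is what constrains the admissible range of $\eta$.
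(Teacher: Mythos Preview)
Your proposal is correct and follows essentially the same route as the paper: angular sectorization, radial partial summation against $\wh{S_{s,t}^\omega}(a/q)$ via Lemma~\ref{lem:rationals}, and then averaging over the angular offset $t$ to convert that lemma's averaged error bound into a pointwise estimate. The paper's implementation differs only cosmetically: it discretizes radially into geometric shells $N_j=(1+\lambda)^j\sqrt N$ with $\lambda\simeq N^{-2\delta}$ (which lies in your admissible range for $\delta_0$), approximates the phase as piecewise constant on each annular sector, and compares directly with the target discrete sum rather than routing through a continuous integral and an Euler--Maclaurin comparison.
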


\begin{proof}
Let $\mathcal P$ be a partition of $[0,2\pi]$ into intervals of equal
length $\lambda \simeq N^{-2\delta}$. For each integer $j\geq0$, set
$N_j = (1+\lambda)^j\sqrt{N}$ and let $j_0$ be such that $N_{j_0}
\leq N<N_{j_0+1}$. For $0<t<2\pi$, $P\in \mathcal P$ and
$j=0,1\dots,j_0$ we set
$$
R (j, P, t) = \Big\{ n \;\colon\;   N_j \leq \N(n) < N _{j+1},\; 
\arg(n)\in P+t\Big\}.
$$
Note that
$$
|R(j,P,t)| = \frac{\lambda^2}{2}N_{j-1} + O\left(N_{j}^{1/2}\right)
\quad\text{and}\quad
\sum_{n\in R(j,P,t)}\,d(n) \ls N^{-4\delta}N_j\log N_j.
$$
Choose $\beta =\beta(P,t)\in\C$ with $\N(\beta)=1$ and
$\arg(\beta)\in P+t$. Then for each $n\in R(j,P,t)$,
$$
\left|1-e(\jap{n-\beta\sqrt{N_j}, \alpha-a/q})\right|
\leq
\N(n-\beta\sqrt{N_j})^{1/2} \; \N(\alpha-a/q)^{1/2}
\ls
N^{ - \delta},
$$
and so
\begin{align}\label{eq:major1}
D_{j,t}^P(\alpha)
:&= \sum_{n\in R(j,P,t)} d(n) e(\jap{n,\alpha}) \\
&= D_{j,t}^P(a/q)\; e(\jap{\beta\sqrt{N_j}, \alpha-a/q})
 + O( N^{-5\delta}N_j).
\end{align}

Next, for each $j$ set
$$
\Delta_j :=
\frac{\lambda\pi N_j}{2\N(q)}
\left(\log N_j - 2\log \N(q)+2\kappa-1\right).
$$
Then for each $j\geq1$ we have
\begin{align}
\Delta_j-\Delta_{j-1}
&=
\frac{\lambda\pi}{2\N(q)}
\left[(N_j-N_{j-1})(\log N_j - 2\log \N(q)+2\kappa-1)
+ N_{j-1}\log(1+\lambda)\right] \\
&=
\frac{\lambda\pi}{2\N(q)}
(N_j-N_{j-1})(\log N_j - 2\log \N(q)+2\kappa) +
O(N^{-6\delta} N_j)\\
&=
\frac{\pi}{\N(q)}
\sum_{n\in R(j,P,t)} (\log \N(n) - 2\log\N(q) + 2\kappa)
+ O(N^{-6\delta} N_j).
\end{align}
It follows that
\begin{align}
\label{eq:major2}
&\;\;\;\; \sum_{j=1}^{j_0}  (\Delta_j-\Delta_{j-1})
e(\jap{\beta\sqrt{N_j}, \alpha-a/q}) \\
&\qquad =
\frac{\pi}{\N(q)}\sum_{j=1}^{j_0}\sum_{n\in R(j,P,t)}
(\log \N(n) - 2\log\N(q) + 2\kappa)
e(\jap{\beta\sqrt{N_j}, \alpha-a/q})
+ O(N^{1-4\delta}) \\
&\qquad =
\frac{\pi}{\N(q)}\sum_{j=1}^{j_0}\sum_{n\in R(j,P,t)}
(\log \N(n) - 2\log\N(q) + 2\kappa - 1)e(\jap{n, \alpha-a/q})
+ O(N^{1-3\delta}) \\
&\qquad =
\frac{\pi}{\N(q)}
\sum_{\substack{\N(n)\leq N \\ \arg(n)\in P+t}}
(\log \N(n) - 2\log\N(q) + 2\kappa - 1)e(\jap{n, \alpha-a/q})
+ O(N^{1-3\delta}).
\end{align}
Since $|\wh{S_{\sqrt{N},t}^P}(\alpha)|\ls N^{1/2}\log N$, using
\eqref{eq:major1}, we can write
\begin{align}
\label{eq:major3}
\wh{S_{N,t}^P}(\alpha)
&= \sum_{j=1}^{j_0}
D_{j,t}^P(a/q)e(\jap{\beta\sqrt{N_j}, \alpha-a/q})
+ O(N^{1-3\delta})\\
&= \sum_{j=1}^{j_0}
(D_{j,t}^P(a/q) - \Delta_j + \Delta_{j-1}) 
e(\jap{\beta\sqrt{N_j}, \alpha-a/q}) \\
&\qquad 
+ \sum_{j=1}^{j_0} (\Delta_j-\Delta_{j-1})
e(\jap{\beta\sqrt{N_j}, \alpha-a/q}) +O(N^{1-3\delta}).
    \end{align}
Let $E_{N}^P(a/q,t)$ be as in Lemma \ref{lem:rationals}. Then using
summation by parts, we can bound the first term on the right as follows:
\begin{align}
& \Abs{\sum_{j=1}^{j_0}
(D_{j,t}^P(a/q) - \Delta_j + \Delta_{j-1}) 
e(\jap{\beta\sqrt{N_j}, \alpha-a/q})} \\
&\qquad\qquad =
\Abs{\sum_{j=1}^{j_0}
[E_{N_j}^P(a/q,t) - E_{N_{j-1}}^P(a/q,t)]
e(\jap{\beta\sqrt{N_j}, \alpha-a/q})}\\
&\qquad\qquad \leq
\Abs{\sum_{j=1}^{j_0} E_{N_j}^P(a/q,t)
(1-e(\jap{\beta(\sqrt{N_j}-\sqrt{N_{j+1}}), \alpha-a/q}))}\\
&\qquad\qquad\qquad\qquad 
+ |E_{N_{j_0}}^P(a/q,t)| + |E_{\sqrt{N}}^P(a/q,t)| \\
&\qquad\qquad \leq
N^{-\delta}\sum_{j=1}^{j_0}|E_{N_j}^P(a/q,t)|
+ |E_{N_{j_0}}^P(a/q,t)| + |E_{\sqrt{N}}^P(a/q,t)|.
\end{align}
Combining this with \eqref{eq:major2} and \eqref{eq:major3}, then
summing over all $P\in\mathcal{P}$ gives
\begin{align}
&\left| \wh{S_N}(\alpha) - 
 \sum_{\substack{\N(n)\leq N}} \frac{\pi}{\N(q)} \left(\log \N(n) -2\log{\N(q)} +2\kappa\right) e(\jap{n, \alpha-a/q}) \right| \\
&\qquad\qquad \ls
N^{-\delta}\sum_{P\in\mathcal{P}}\sum_{j=1}^{j_0}|E_{N_j}^P(a/q,t)|
+ \sum_{P\in\mathcal{P}}|E_{N_{j_0}}^P(a/q,t)| + \sum_{P\in\mathcal{P}}|E_{\sqrt{N}}^P(a/q,t)| + N^{1-\delta}
\end{align}
Finally, averaging over $t$ and applying Lemma \ref{lem:rationals} gives the
conclusion.
\end{proof}

We now provide estimates for the complementary minor arcs:

\begin{prop}\label{p:minorexpineq}
Assume that $\alpha\in \mathfrak{m}$. Then 
$$
\left|\wh{S_N}(\alpha)\right| \ls N^{1-\delta/4}\log N.
$$
\end{prop}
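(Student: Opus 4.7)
I would combine a Dirichlet rational approximation on the minor arcs with the Dirichlet hyperbola method and the exponential sum bound from Remark \ref{rmk:expsum}, adapting to $\Z[i]$ the approach used for the classical divisor function in \cite{giannitsi2022}.

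The first step is Dirichlet approximation in $\Z[i]$ with denominator $Q = N^{1-\delta/2}$: there exist coprime $a, q\in\Z[i]$ with $1\leq\N(q)\leq Q$ and $\N(\alpha - a/q)\leq(\N(q)Q)^{-1}$. The hypothesis $\alpha\in\mathfrak{m}_{N,\delta}$ rules out $\N(q)\leq N^\delta$, since otherwise this upper bound on $\N(\alpha - a/q)$ would contradict the lower bound $N^{-1+\delta/2}/\N(q)$ implicit in the definition of the minor arcs. Hence $N^\delta<\N(q)\leq N^{1-\delta/2}$ and we write $\alpha = a/q + \beta$ with $\N(\beta)\leq\N(q)^{-1}N^{-1+\delta/2}$. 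Next, interpreting $d(n) = \#\{(m,k)\in\Z[i]^2 : mk = n\}$, the hyperbola method gives
\begin{equation*}
\wh{S_N}(\alpha) = 2\sum_{1\leq\N(m)\leq\sqrt N}\Sigma_m(\alpha) - \sum_{1\leq\N(m),\N(k)\leq\sqrt N} e(\jap{k,\wb m\alpha}),
\end{equation*}
with $\Sigma_m(\alpha) := \sum_{1\leq\N(k)\leq N/\N(m)} e(\jap{k,\wb m\alpha})$; Remark \ref{rmk:expsum} supplies
\begin{equation*}
|\Sigma_m(\alpha)| \ls \min\bigl\{N/\N(m),\,(N/\N(m))^{1/2} + (N/\N(m))^{1/4}\|\wb m\alpha\|^{-3/2}\bigr\}.
\end{equation*}
The $(N/\N(m))^{1/2}$ contribution sums unproblematically to $\ls N^{3/4}$, so the main task is to estimate $\sum_m(N/\N(m))^{1/4}\|\wb m\alpha\|^{-3/2}$.

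For this sum I would partition the $m$'s by their residue $r = \wb m a\bmod q\in B_q$ and dyadically by $\N(m)\in[M,2M]$. Since $\wb m\alpha\equiv r/q+\wb m\beta\pmod{\Z[i]}$ and $|\wb m\beta|\leq(\N(m)/\N(q))^{1/2} N^{-1/2+\delta/4}\leq N^{-1/4-\delta/4}$ (using $\N(q)>N^\delta$ and $\N(m)\leq\sqrt N$), one has the dichotomy: either $\|r/q\|\gg|\wb m\beta|$, so $\|\wb m\alpha\|\gs\|r/q\|$ and the improved bound applies with $\|\wb m\alpha\|^{-3/2}\ls\|r/q\|^{-3/2}$; or the converse, where one reverts to $|\Sigma_m|\leq N/\N(m)$. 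Combining the lattice estimates $\#\{m : \wb m a\equiv r\bmod q,\,\N(m)\in[M,2M]\}\ls M/\N(q)+1$ and $\sum_{r\in B_q\setminus\{0\}}\|r/q\|^{-3/2}\ls\N(q)$ (which one proves by comparing the sum with $\int_{B_q/q}\|y\|^{-3/2}\,dy$ over the fundamental domain), and separately handling the degenerate residue $r=0$ (where $m$ is a multiple of $\wb q$ and contributes $\ls(N/\N(q))\log N\ls N^{1-\delta}\log N$), dyadic summation in $M$ then delivers the desired bound $\ls N^{1-\delta/4}\log N$.

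The main obstacle is the final dyadic bookkeeping. One must carefully balance the contribution of the \emph{bad} $m$'s (those for which $\|r/q\|$ is small enough to be perturbed by $\wb m\beta$, forcing the trivial bound) against the \emph{good} ones, keeping the total below $N^{1-\delta/4}\log N$. This requires $M$-dependent counting in two different regimes: $M<\N(q)$, where only $M$ residues are active and a refined bound on $\sum_{\text{hit }r}\|r/q\|^{-3/2}$ must replace the universal one, and $M\geq\N(q)$, where the residues equidistribute. The two-dimensional annular counting in the tessellation by $B_q$ demands more care than the integer analog, but is manageable using the geometry of $B_q/q$ as a fundamental domain for $\Z[i]$.
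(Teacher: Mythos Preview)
Your plan is sound and would yield the bound, but it takes a considerably longer route than the paper. The paper's proof is essentially two lines: after the identical Dirichlet approximation step (producing $a/q$ with $\N(q)>N^\delta$ and hence $\N(\alpha-a/q)\leq N^{-1-\delta/2}$), it simply uses the crude Lipschitz bound $|e(x)-e(y)|\ls|x-y|$ to get
\[
\bigl|\wh{S_N}(\alpha)-\wh{S_N}(a/q)\bigr|\leq\sum_{\N(n)\leq N}d(n)\,\N(n)^{1/2}\N(\alpha-a/q)^{1/2}\ls N^{1-\delta/4}\log N,
\]
and then (implicitly) invokes Lemma~\ref{lem:rationals} with $\omega=[0,2\pi]$ to bound $\wh{S_N}(a/q)$ itself: since $N^\delta<\N(q)\leq N^{1-\delta/2}$, that lemma gives $|\wh{S_N}(a/q)|\ls N\N(q)^{-1}\log N+(N^{3/4}+N^{1/4}\N(q)^{3/4})\log(1+\N(q))\ls N^{1-\delta/4}\log N$.

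You instead run the hyperbola method directly at the irrational point $\alpha$, controlling each inner sum $\Sigma_m(\alpha)$ via Remark~\ref{rmk:expsum} and then partitioning by the residue of $\wb m a$ modulo $q$. This is the standard integer-case minor arc argument transplanted to $\Z[i]$, and it works, but in this paper it largely duplicates the effort already invested in Lemma~\ref{lem:rationals}: that lemma already executes the hyperbola method and the residue-class partition (at $a/q$), and the paper's observation is that on the minor arcs $\alpha$ sits close enough to such a rational that a trivial perturbation transfers the bound. Your approach has the advantage of being self-contained (it never cites Lemma~\ref{lem:rationals}), at the cost of the two-regime dyadic bookkeeping you correctly identify as the main obstacle; the paper's approach is shorter precisely because that bookkeeping has already been amortised into the earlier lemma.
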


\begin{proof}
By Dirichlet's Approximation Theorem there exist $a,q\in\Z[i]$ with $a\in\A_{q}$ such that
$\N(\alpha-a/q)\leq N^{-1+ \delta/2}\N(q)^{-1}$. Since $\alpha\in\mathfrak{m}$,
we have $\N(q)>N^\delta$, and so $\N(\alpha-a/q)\leq N^{-1- \delta/2}$.
Then
$$
\left|\wh{S_N}(\alpha) - \wh{S_N}(a/q)\right|
\ls
\sum_{\N(n)\leq N} d(n) \N(n)^{1/2}\N(\alpha-a/q)^{1/2}
\leq
N^{1-\delta/4}\log N.
$$
\end{proof}

\medskip 

\section{Approximation}

\subsection{Approximating multipliers}
We will establish an approximation result for the kernel
of our operator, on the Fourier side, by utilizing Section
\ref{sec:circlemethod}. To this end, we introduce multipliers
$L_{N,q}$ defined by
\begin{align}\label{eq:approximatingkernel}
    \widehat {L_{N,q}}(\alpha)
    = \frac{1}{N\log N}\frac{\pi}{\N(q)}\sum_{\substack{\N(n)\leq N}} \left(\log \N(n) -2\log{\N(q)} +2\kappa\right) e(\jap{n, \alpha}).
\end{align}
Using the estimates in Remark \ref{rmk:expsum}, one easily sees that for
all $\alpha$ we have the bound
\begin{equation}\label{eq:kernelestimate}
|\widehat {L_{N,q}}(\alpha)| \ls \min \left\{\; 1, \N(q)^{-1}(N^{-1/2} + N^{-3/4}\N(\alpha)^{-3/4}) \right\}.
\end{equation}
We will also need the Fourier kernel of the usual averaging operators:
\begin{align}
    \widehat M_{N}(\alpha) 
    &= \frac{1}{\pi N}\sum_{\substack{\N(n)\leq N}}  e(\jap{n,\alpha}).
\end{align}
Next, we introduce a cut-off by considering a Schwartz function $\eta$
such that
$$ \one_{\N(x) \leq \frac{1}{16}} \leq \eta \leq \one_{\N(x)
\leq \frac{1}{8}}.
$$
Then for an integer $s\geq0$, we set $\eta_s(\alpha) = \eta(2^s \alpha)$ and
$$
\calR_s = \{a/q: \; a\in\A_q, \; 2^s \leq \N(q) < 2^{s+1} \}.
$$
An important observation that we will use frequently is that the
functions $\eta_s(\cdot - a/q)$, $a/q\in\calR_s$, have pairwise
disjoint supports. We can now define our approximating multipliers
$K_{N,s}$ and $K'_{N,s}$ as follows:
\begin{align}
\wh{K_{N,s}}(\alpha)
&=
\sum_{a/q\in\calR_s} 
\widehat{L_{N,q}}(\alpha-a/q) \, \eta_{s}(\alpha-a/q);\\
\wh{K'_{N,s}}(\alpha)
&=
\sum_{a/q\in\calR_s} \frac{\pi^2}{\N(q)}
\widehat{M_N}(\alpha-a/q) \, \eta_{s}(\alpha-a/q). \label{eq:KNdef}
\end{align}

\begin{lemma}\label{l:applemm}
For each $0<\delta\leq 1/20$, the following estimates hold:
\begin{enumerate}[label=(\roman*), itemsep=15pt, topsep=15pt]
\item\label{l:applemm-power}
$\displaystyle
\left\| \wh{A_N} - \sum_{s:2^s \leq N^\delta}\wh{K_{N,s}} \right\|_\infty
\ls N^{-\delta/4};
$
\item\label{l:applemm-log}
$\displaystyle
\left\| \wh{A_N} - \sum_{s:2^s \leq N^\delta}\wh{K'_{N,s}} \right\|_\infty
\ls \frac{1}{\log N}.
$
\end{enumerate} 
\end{lemma}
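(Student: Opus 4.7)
The plan is to work pointwise in $\alpha\in\T^2$ and to split the torus into three regions determined by the major arcs $\mathfrak M_{N,\delta}(a/q)$ and the supports of the bumps $\eta_s(\cdot-a/q)$. Two preliminary observations drive the analysis: for each $s$ with $2^s\leq N^\delta$, the translates $\{\eta_s(\cdot-a/q):a/q\in\calR_s\}$ have pairwise disjoint supports (so at most one term in the double sum defining $\sum_s\wh{K_{N,s}}(\alpha)$ is nonzero at any given $\alpha$); and for $a/q\in\calR_s$ the major arc $\mathfrak M_{N,\delta}(a/q)$ is contained in the region where $\eta_s(\cdot-a/q)\equiv 1$, since its radius $\simeq N^{-1/2+\delta/4}\N(q)^{-1/2}$ is much smaller than the scale $\simeq 2^{-s}$ of the bump.

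For part \ref{l:applemm-power} I would analyse three regions. In Region A, where $\alpha\in\mathfrak M_{N,\delta}(a/q)$ for some $a/q\in\calR_{s_0}$ with $\N(q)\leq N^\delta$, disjointness leaves only the term indexed by $(s_0,a/q)$ in the approximating sum, and the bump there equals $1$. Proposition \ref{p:themajorarcgauss} identifies $\wh{S_N}(\alpha)$ with $N\log N\cdot\wh{L_{N,q}}(\alpha-a/q)$ up to $O(N^{1-\delta})$, and dividing by $D(N)$ produces the required approximation with error $O(N^{-\delta})$. In Region B, $\alpha$ lies in some bump support $\eta_s(\cdot-a/q)$ with $2^s\leq N^\delta$ but outside $\mathfrak M_{N,\delta}(a/q)$. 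Then $\N(\alpha-a/q)>N^{-1+\delta/2}\N(q)^{-1}$, and \eqref{eq:kernelestimate} gives
\[
|\wh{L_{N,q}}(\alpha-a/q)|\ls \N(q)^{-1}N^{-1/2}+\N(q)^{-1/4}N^{-3\delta/8}\ls N^{-3\delta/8};
\]
meanwhile $\alpha$ is in the minor arcs, so $|\wh{A_N}(\alpha)|\ls N^{-\delta/4}$ by Proposition \ref{p:minorexpineq}. Both estimates are $O(N^{-\delta/4})$ since $3/8>1/4$. In Region C, $\alpha$ is outside every relevant bump support, the approximant vanishes, and $\alpha\in\mathfrak m_{N,\delta}$ gives $|\wh{A_N}(\alpha)|\ls N^{-\delta/4}$ by the same proposition.

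For part \ref{l:applemm-log}, I would derive the bound from part \ref{l:applemm-power} by comparing the two approximating kernels pointwise. Summation by parts yields
\[
\sum_{\N(n)\leq N}\log\N(n)\,e(\jap{n,\alpha}) = \log N\sum_{\N(n)\leq N}e(\jap{n,\alpha}) - \sum_{k=1}^{N-1}\log(1+\tfrac1k)\sum_{\N(m)\leq k}e(\jap{m,\alpha}),
\]
and the trivial bound on each inner sum makes the second term $O(N)$. Inserting this into the definition of $\wh{L_{N,q}}$, the $\log N$ cancels against the prefactor $1/\log N$ and reproduces $\tfrac{\pi^2}{\N(q)}\wh{M_N}(\alpha)$ up to an $O(1/\log N)$ pointwise error, where the $\log\N(q)$ contribution is absorbed using $\log x/x=O(1)$ for $x\geq1$. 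Disjointness then lifts this to $|\sum_s\wh{K_{N,s}}-\sum_s\wh{K'_{N,s}}|\ls 1/\log N$, and combining with part \ref{l:applemm-power} concludes.

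The main obstacle is Region B of part \ref{l:applemm-power}: the bump $\eta_s$ extends well beyond the major arc into the minor-arc regime, so both sides of the approximation must be handled there using only Proposition \ref{p:minorexpineq} and the off-diagonal decay in \eqref{eq:kernelestimate}. The tight margin between the exponents $3\delta/8$ and $\delta/4$ is precisely what makes the argument close, and is compatible with the smallness restriction $\delta\leq 1/20$ inherited from Proposition \ref{p:themajorarcgauss}.
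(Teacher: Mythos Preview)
Your overall strategy matches the paper's, but there is a genuine gap: the claim that ``at most one term in the double sum defining $\sum_s\wh{K_{N,s}}(\alpha)$ is nonzero at any given $\alpha$'' is false. Disjointness of the bumps $\eta_s(\cdot-a/q)$ holds only \emph{within} a fixed level $s$, not across levels. For instance, at $\alpha=1/3$ the level-$0$ bump centred at $0$ (since $\N(1/3)=1/9\leq 1/8$) and the level-$3$ bump centred at $1/3$ are both nonzero. In general, for any $\alpha$ there can be one nonzero term per level, hence up to $O(\delta\log N)$ terms overall.

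For part \ref{l:applemm-power} this is easily repaired along the lines of the paper. In Region~A you must also bound the contributions from levels $s\neq s_0$: for $a/q\in\calR_s$ with $a/q\neq a_0/q_0$ one has $\N(\alpha-a/q)\gs \N(a/q-a_0/q_0)\geq 1/(\N(q)\N(q_0))\gs N^{-\delta}/\N(q)$, and then \eqref{eq:kernelestimate} gives $|\wh{L_{N,q}}(\alpha-a/q)|\ls 2^{-s/4}N^{-\delta}$, which sums over $s$. In Region~B you likewise need to sum your per-term bound over the active levels; retaining the factor $\N(q)^{-1/4}\simeq 2^{-s/4}$ makes the sum geometric, so your estimate survives.

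For part \ref{l:applemm-log} the gap actually breaks the argument as written. Your per-level error $O(1/\log N)$, summed over $O(\delta\log N)$ levels, yields only $O(\delta)$, not $O(1/\log N)$. The fix is to keep the $\N(q)^{-1}$ prefactor rather than absorbing it via $\log x/x=O(1)$: your own computation shows $\bigl|\wh{L_{N,q}}(\beta)-\tfrac{\pi^2}{\N(q)}\wh{M_N}(\beta)\bigr|\ls \tfrac{1+\log\N(q)}{\N(q)\log N}\ls \tfrac{1+s}{2^s\log N}$, and this \emph{does} sum over $s$ to $O(1/\log N)$. The paper's one-line proof of \ref{l:applemm-log} (``for each $s$, $\|\wh{K_{N,s}}-\wh{K'_{N,s}}\|_\infty\ls 1/\log N$'') implicitly relies on exactly this summable decay in $s$.
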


\begin{proof}
\ref{l:applemm-power}
Fix $\alpha\in\T^2$ and $N$ large. By Dirichlet's Approximation Theorem there
exist $a_0,q_0\in\Z[i]$ with $a_0\in\A_{q_0}$ and  $\N(q_0) \leq N^{1-\delta/2}$
such that $\N(\alpha-a_0/q_0)\leq N^{-1+ \delta/2}\N(q_0)^{-1}$. Suppose $2^{s_0}\leq\N(q_0)<2^{s_0+1}$. We consider to cases depending on $s_0$.

\textbf{Case 1:} $2^{s_0} \leq N^\delta$. In this case $\eta_{s_0}(\alpha-a_0/q_0)=1$ and
so $\wh{K_{N,s_0}}(\alpha)=\widehat{L_{N,q_0}}(\alpha-a_0/q_0)$. Then by Proposition
\ref{p:themajorarcgauss},
\begin{equation}
|\wh{A_N}(\alpha) - \widehat{L_{N,q_0}}(\alpha-a_0/q_0)| \ls N^{-\delta}.
\label{eq:case1-main}
\end{equation}
For any $s\neq s_0$ and $a/q\in\calR_s$,
$$
\N(\alpha-a/q)
\gs
\N(a/q - a_0/q_0) - \N(\alpha-a_0/q_0)
\geq
\N(1/qq_0)
\gs
N^{-\delta}\N(q)^{-1}.
$$
Using \eqref{eq:approximatingkernel}, and noting that $\delta$ is small,
we see that
$$
|\widehat {L_{N,q}}(\alpha)| \ls \N(q)^{-1}(N^{-1/2} + N^{-3/4}N^{3\delta/4}\N(q)^{3/4})
\ls
2^{-s/4} N^{-\delta},
$$
and so
\begin{equation}
\left |\sum_{\substack{s:2^s \leq N^\delta \\ s\neq s_0}}\wh{K_{N,s}}(\alpha)\right|
\ls N^{-\delta}.
\label{eq:case1-error}
\end{equation}
Combining \eqref{eq:case1-main} and \eqref{eq:case1-error} completes the proof in this case.

\textbf{Case 2:} $2^{s_0} > N^\delta$. Then for any $s$ such that $2^s\leq N^\delta$ and $a/q\in\calR_s$, $\N(\alpha - a/q) \gs N^{-1+\delta/2}\N(q)^{-1}$. The argument above
shows that 
\begin{equation}
\left |\sum_{s:2^s \leq N^\delta}\wh{K_{N,s}}(\alpha)\right|
\ls N^{-\delta}.
\end{equation}
In addition, by Proposition \ref{p:minorexpineq}, we have that $|\wh{A_N}(\alpha)|\leq N^{-\delta/4}$.
Combining these completes the proof of \ref{l:applemm-power}.

\ref{l:applemm-log}
This follows from \ref{l:applemm-power} upon observing that for each $s$,
$$ \|\wh{K_{N,s}}(\alpha) - \wh{K'_{N,s}}(\alpha)\|_\infty
\ls \frac{1}{\log N}. $$
\end{proof}

\medskip 
\subsection{The High and Low Parts}
We now decompose $A_N$ into \emph{high} and \emph{low} parts as follows:
for $\delta>0$, we write
$$
\wh{A_N} = \wh{\textup{Lo}_{N,\delta}} + \wh{\textup{Hi}_{N,\delta}},
\quad\text{where}\quad
\widehat{\textup{Lo}_{N,\delta}} (\alpha) =  \sum_{s:2^s \leq N^\delta}\wh{K_{N,s}}(\alpha).
$$
Lemma \ref{l:applemm}\ref{l:applemm-power} states that for $\delta$
sufficiently small,
\begin{equation}\label{eq:highpart}
\|\wh{\textup{Hi}_{N,\delta}}\|_{\infty}\ls N^{-\delta/4}.
\end{equation}
This is sufficient for our purposes and so we turn our attention to the
low part. Notice that 
\begin{align}
\textup{Lo}_{N,\delta} (n)
    &= \sum_{s:\,2^s\leq N^{\delta}}
    \sum_{a/q\in\calR_s}
    \mathcal F^{-1} \left(
    \widehat {L_{N,q}}(\cdot - a/q) \;
    \eta_{s}(\cdot-a/q) \right)(n) \\
    & = \sum_{s:\,2^s\leq N^{\delta}}
    \sum_{a/q\in\calR_s}
    \Big(L_{N,q} * \wc {\eta_{s}}\Big)(n) \, e(\jap{n,a/q}) \\
    & = \sum_{\N(q)<2N^\delta}
    \Big(L_{N,q} * \wc{\eta_{s}}\Big)(n) \, \tau_q(n)
\end{align}
where $\tau_q(n)$ is a \emph{Ramanujan sum}
$$ \tau_q(n) := \sum_{a\in\A_q} e(\jap{n,a/q}). $$
Thus to control the Low part, one needs to first be able to control Ramanujan sums. The following result can be easily adapted from \cite{gklmr2023}*{Prop 3.11}.

\begin{lemma}\label{lem:Ramanujan}
For all $\eps>0$ and all integers $N,Q,k>2$ such that $N>Q^k$, we have
$$
\left(\frac{1}{N}\sum_{\N(n)\leq N}
\left(\sum_{\N(q)<Q} \frac{|\tau_q(n)|}{\N(q)}\right)^k\right)^{1/k}
\ls_{k,\eps}
Q^\eps.
$$
\end{lemma}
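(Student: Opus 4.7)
My plan is to compute the $k$-th moment on the left directly. The starting point is the Gaussian analogue of the classical Ramanujan-sum identity
\begin{equation*}
\tau_q(n) = \sum_{d \mid (n,q)} \N(d)\,\mu(q/d),
\end{equation*}
where $\mu$ is the M\"obius function on $\Z[i]$; this immediately yields the pointwise bound $|\tau_q(n)| \leq \sum_{d\mid(n,q)} \N(d)$. I would substitute this into the expanded $k$-th power inside the average, giving a sum over $k$-tuples $(q_1,\ldots,q_k)$ with $\N(q_i)<Q$ and divisor tuples $(d_1,\ldots,d_k)$ with $d_i \mid (n,q_i)$, and then swap the averaging in $n$ past the $q_i$ and $d_i$ sums.

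For fixed $(d_1,\ldots,d_k)$, the resulting inner expression is the proportion of $n$ with $\N(n)\leq N$ that are divisible by $\ell := \mathrm{lcm}(d_1,\ldots,d_k)$. Here is where the hypothesis $N > Q^k$ is used: since $\N(\ell) \leq \prod_i \N(d_i) \leq \prod_i \N(q_i) < Q^k < N$, the standard lattice-point count for Gaussian integers gives a bound $\ls 1/\N(\ell)$ for this proportion. Meanwhile, the inner sum over $q_i$ with $d_i\mid q_i$ and $\N(q_i)<Q$ contributes a factor of at most $(\log Q)/\N(d_i)$ per index, using $\sum_{\N(q')<X} \N(q')^{-1} \ls \log X$ which is immediate from \eqref{eq:sum-of-squares}. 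Collecting these contributions bounds the $k$-th moment by
\begin{equation*}
(\log Q)^k \sum_{\substack{d_1,\ldots,d_k \\ \N(d_i)<Q}} \frac{1}{\N(\mathrm{lcm}(d_1,\ldots,d_k))}.
\end{equation*}

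To finish, I would group the remaining sum by the value of $\ell = \mathrm{lcm}(d_1,\ldots,d_k)$: each $d_i$ must divide $\ell$, so the number of $k$-tuples with a given lcm is at most $d(\ell)^k$, and $\N(d_i)<Q$ forces $\N(\ell)<Q^k$. The problem thus reduces to bounding $\sum_{\N(\ell)<Q^k} d(\ell)^k/\N(\ell)$, which is $\ls_{k,\eps} Q^{k\eps}$ via the sub-polynomial bound $d(\ell)\ls_\eps \N(\ell)^\eps$ combined with a routine Abel-summation argument. Taking $k$-th roots and absorbing the $(\log Q)$ factor into $Q^\eps$ then gives the claim. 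The one real technical point is that the counting bound $|\{n: \N(n)\leq N, \; \ell \mid n\}| \ls N/\N(\ell)$ must apply uniformly to every $\ell$ appearing in the sum — which is precisely the quantitative role played by $N > Q^k$ — so the argument is essentially a careful accounting of how the divisor range matches the averaging scale, along the lines of the analogous computation in the rational integers carried out in \cite{gklmr2023}*{Prop 3.11}.
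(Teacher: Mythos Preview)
Your proposal is correct and is precisely the adaptation of \cite{gklmr2023}*{Prop.~3.11} that the paper invokes in lieu of a proof; the paper itself gives no further argument beyond that citation. Your sketch in fact supplies more detail than the paper does, and each step (the divisor-sum bound for $\tau_q$, the lattice-point count under the hypothesis $N>Q^k$, the $\log Q$ from the $q_i$-sum, and the lcm grouping with the divisor bound) is sound.
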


Using this, we can now prove $\ell^p\rightarrow \ell^\infty$ bound
for $\textup{Lo}_{N,\delta}$.

\begin{lemma}\label{lem:low-part}
Let $1<p<2$, let $N>2$ be an integer and let $E$ be a disk of radius $\sqrt{N}$.
Then for all $\delta\leq 1/p'$ and all functions $f$ supported on $2E$, we have
$$
\jap{\textup{Lo}_{N,\delta} f}_{E,\infty}
\ls_p
N^{\delta/p'}\jap{f}_{2E,p}.
$$
\end{lemma}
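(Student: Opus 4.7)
The plan is to reduce the claim to an $\ell^{p'}$ estimate on the convolution kernel $\textup{Lo}_{N,\delta}$, which is then controlled via the Ramanujan sum moment bound (Lemma \ref{lem:Ramanujan}).

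First I would establish a pointwise kernel bound. From the definition \eqref{eq:approximatingkernel}, and using that $\log \N(q) = O(\log N)$ uniformly for $\N(q) < 2N^\delta$, one has $\|L_{N,q}\|_\infty \lesssim (\N(q)N)^{-1}$ with support contained in $\{\N(n) \leq N\}$. Since $\eta_s$ is a smooth bump supported in a disk of radius $\simeq 2^{-s}$ in $\T^2$, its inverse Fourier transform $\widecheck{\eta_s}$ has $\ell^1$ norm $\lesssim 1$ with rapid Schwartz decay beyond scale $2^s \leq N^\delta$. Since $N^\delta \ll \sqrt N$, convolution with $\widecheck{\eta_s}$ does not enlarge the effective support of $L_{N,q}$, yielding
$$
|\textup{Lo}_{N,\delta}(n)| \lesssim \frac{\Psi_N(n)}{N}\sum_{\N(q)<2N^\delta}\frac{|\tau_q(n)|}{\N(q)},
$$
where $\Psi_N$ is dominated by a Schwartz tail of $\mathbf{1}_{\{\N(n) \lesssim N\}}$.

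Next, for $x \in E$ and $f$ supported in $2E$, Hölder's inequality yields $|\textup{Lo}_{N,\delta}f(x)| \leq \|\textup{Lo}_{N,\delta}\|_{p'}\|f\|_{\ell^p(2E)} \simeq N^{1/p}\|\textup{Lo}_{N,\delta}\|_{p'}\langle f\rangle_{2E,p}$, reducing the problem to showing $\|\textup{Lo}_{N,\delta}\|_{p'} \lesssim N^{\delta/p' - 1/p}$. From the pointwise bound and the effective support property,
$$
\|\textup{Lo}_{N,\delta}\|_{p'}^{p'} \lesssim N^{1-p'}\cdot\frac{1}{N}\sum_{\N(n)\lesssim N}\Big(\sum_{\N(q)<2N^\delta}\frac{|\tau_q(n)|}{\N(q)}\Big)^{p'}.
$$
Since $1<p<2$ implies $p' > 2$, and the standing assumption $\delta \leq 1/20$ from Lemma \ref{l:applemm} leaves ample room, we fix an integer $k$ satisfying $k > 2$, $k \geq p'$, and $\delta k < 1$. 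The monotonicity of $L^p$ averages (Jensen) allows replacing the $p'$-power by a $k$-power average, and Lemma \ref{lem:Ramanujan} with $Q = 2N^\delta$ bounds the resulting $k$-average by $N^{\delta\eps p'}$ for any $\eps>0$. Combining, $\|\textup{Lo}_{N,\delta}\|_{p'} \lesssim N^{-1/p} N^{\delta\eps}$, and choosing $\eps \leq 1/p'$ completes the proof.

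The main obstacle is arranging the three constraints on $k$ in Lemma \ref{lem:Ramanujan} --- integer, $> 2$, and $\delta k < 1$ --- while simultaneously having $k \geq p'$; the hypothesis $\delta \leq 1/p'$ together with the standing $\delta \leq 1/20$ is precisely tailored to make this feasible for every finite $p'>2$. A secondary technical point is the verification that $\widecheck{\eta_s}$ contributes only a negligible fattening of $\mathrm{supp}(L_{N,q})$, so that the $\ell^{p'}$ norm of $\textup{Lo}_{N,\delta}$ is essentially a sum over $\{\N(n) \lesssim N\}$ rather than all of $\Z[i]$.
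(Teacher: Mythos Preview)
Your proposal is correct and follows essentially the same route as the paper: bound $|L_{N,q}\ast\wc\eta_s|\lesssim (N\,\N(q))^{-1}$, apply H\"older, and invoke Lemma~\ref{lem:Ramanujan}. The paper streamlines two of your steps --- it truncates the convolution sum to $\N(y)\lesssim N$ directly from the support condition on $f$ (so the Schwartz--tail discussion of $\wc\eta_s$ is unnecessary), and it reduces to integer $p'$ at the outset rather than passing to a larger integer $k$ via Jensen; note also that the bound $\delta\le 1/20$ you invoke is \emph{not} a hypothesis of this lemma, only $\delta\le 1/p'$ is, so your existence argument for $k$ should rest on that alone.
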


\begin{proof}
It suffices to consider the case when $p'$ is an integer.

We begin by observing that
$$
|(L_{N,q}\ast \wc\eta_{s})(x)| \leq \|L_{N,q}\|_{\ell^\infty} 
 \|\wc\eta_{s}\|_{\ell^1} \ls \frac{1}{N \N(q)}.
 $$
Consequently, for $x\in E$ we have that 
\begin{align*}
|\textup{Lo}_{N,\delta} f(x)| &\ls \sum_{\N(y)<4N}\sum_{\N(q)<2N^\delta} |\tau_q(y)|  L_{N,q}\ast \wc\eta_{s}(y)| |f(x-y)| \\
    &\ls \frac{1}{N}\sum_{\N(y)< 9N}\sum_{\N(q)<2N^\delta} \frac{|\tau_q(y)|}{\N(q)}|f(x-y)| \\
    &\leq \left(\frac{1}{N}\sum_{\N(n)<9N} \left(\sum_{\N(q)<2N^\delta} \frac{|\tau_q(n)|}{\N(q)}\right)^{p'}\right)^{1/p'} \left(\frac{1}{N}\sum_{\N(y)<9N} |f(x-y)|^p\right)^{1/p}. 
\end{align*}
The conclusion now follows from Lemma \ref{lem:Ramanujan} with
$Q=\lfloor 2N^\delta \rfloor$, $k=p'$ and $\eps = 1/p'$.
\end{proof}

\section{Proof of the main theorems}

As is expected for these types of questions, \cites{hkly2020,giannitsi2022,glmr2022,gklmr2023}, 
a multifrequency type of inequality is required, especially for the sparse bound. One can easily follow the steps of Lacey's proof of Bourgain's inequality \cite{lacey1997} to obtain the following:

\begin{thm}\label{t:multi}
Let $\phi:\mathbb{C}\rightarrow \mathbb{C}$ be a smooth function that satisfies:
\begin{align*}
    |\phi(x)| \lesssim |x|^{-3} , &&
    |\hat{\phi}(\alpha)-1| \lesssim |\alpha| ,&&
    |\hat{\phi}(\alpha)|  \lesssim |\alpha|^{-2}
\end{align*}
Assume that $K\in \Z[i]$ and $\N(K)>1$. Let $\lambda_1,\cdots, \lambda_L$ be in $2^{-j_0}K^{-1}\Z[i]$. Then 
\begin{align}
    \left\lVert \sup_{j\geq j_0} 
    \left\vert\sum_{\ell=1}^K 
    \textup{Mod}_{\lambda_{\ell}} \,\left[\phi_j\ast 
    \textup{Mod}_{-\lambda_{\ell}} f\right] \right \vert \right \rVert_{\ell^2} \lesssim \log\log (L+\N(K)) \lVert f\rVert_{\ell^2},
\end{align}
where $\phi _j (x) = 2^{-j}\phi (2^{-j}x)$ and $\textup{Mod}_{\theta} f(x) = e(\theta \cdot x)f(x)$.
\end{thm}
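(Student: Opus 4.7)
The plan is to adapt Lacey's proof of Bourgain's multifrequency maximal lemma \cite{lacey1997} to the Gaussian-integer setting. The dual group becomes $\T^2\cong\C/\Z[i]$, the bilinear pairing is $\langle\cdot,\cdot\rangle$ as defined in Section~\ref{sec:preliminaries}, and norms are measured by $\N$; otherwise the structural ingredients of the argument transfer with only cosmetic changes.

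First, I would pass to the Fourier side. The operator
$$
T_j\colon f\mapsto \sum_{\ell=1}^L \textup{Mod}_{\lambda_\ell}\bigl[\phi_j\ast\textup{Mod}_{-\lambda_\ell}f\bigr]
$$
is convolution with the multiplier $m_j(\alpha)=\sum_{\ell}\widehat\phi\bigl(2^j(\alpha-\lambda_\ell)\bigr)$. The three hypotheses on $\phi$ translate to $\widehat\phi(0)=1$, $|\widehat\phi(\alpha)-1|\lesssim|\alpha|$, and $|\widehat\phi(\alpha)|\lesssim\min(1,|\alpha|^{-2})$, so each summand is a smooth bump of width $\sim 2^{-j}$ centred at $\lambda_\ell$. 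Because the $\lambda_\ell$ lie in $2^{-j_0}K^{-1}\Z[i]$, their minimal separation is $\gtrsim 2^{-j_0}\N(K)^{-1/2}$. Setting $j_\star=j_0+\lceil\tfrac12\log_2(L+\N(K))\rceil$, the bumps become effectively pairwise disjoint for $j\geq j_\star$, so only $O(1)$ of them contribute at any given $\alpha$; for $j_0\leq j<j_\star$ the bumps may overlap but there are only $O(\log(L+\N(K)))$ such scales.

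Next, for the large-scale range $j\geq j_\star$ I would apply a Rademacher--Menshov decomposition
$$
\sup_{j\geq j_\star}|T_jf|\leq|T_{j_\star}f|+\sum_{k\geq 0}\Bigl(\sum_{j\in B_k}|T_{j+1}f-T_jf|^2\Bigr)^{1/2}
$$
along dyadic blocks $B_k$. By Plancherel, the inner square function is dominated by $\sup_\alpha\sum_{j\in B_k}|m_{j+1}(\alpha)-m_j(\alpha)|^2\cdot\|f\|_{\ell^2}^2$, and the Lipschitz bound $|\widehat\phi(\alpha)-1|\lesssim|\alpha|$ combined with the quadratic decay yields a telescoping estimate $\sum_j|m_{j+1}(\alpha)-m_j(\alpha)|^2\lesssim 1$ at every $\alpha$---this is exactly where disjointness of the bumps at scales $\geq j_\star$ is crucial, since otherwise the $L$-fold overlap would degrade the bound. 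The small-scale range $j_0\leq j<j_\star$ is handled by the kernel decay $|\phi(x)|\lesssim |x|^{-3}$, which supplies a Hardy--Littlewood majorant bounding $|T_jf|$ by $\sum_\ell M_{\mathrm{HL}}(\textup{Mod}_{-\lambda_\ell}f)$; combined with the $\ell^2$-orthogonality of distinct modulations and the $O(\log(L+\N(K)))$ count of scales, this contributes a comparable factor.

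The main obstacle is sharpening the natural $\log$-type loss from a single Rademacher--Menshov application to the $\log\log(L+\N(K))$ stated. This is the combinatorial core of Bourgain's original argument: one carries out a second chaining iteration on a progressively coarser partition of $\{\lambda_\ell\}$, extracting additional orthogonality from the fact that only a bounded number of bumps are ``active'' at any $\alpha\in\T^2$ once the scales separate. The entropy/volume-counting estimate driving this double chaining in \cite{lacey1997} goes through in the Gaussian-integer case because the grid $2^{-j_0}K^{-1}\Z[i]$ behaves in two real dimensions exactly as $2^{-j_0}K^{-1}\Z$ does in one, and $\N$-balls in $\T^2$ contain the expected number of lattice points. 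The bulk of the technical work is precisely in executing this double-chaining argument, with the Gaussian pairing $\langle\cdot,\cdot\rangle$ in place of the usual real inner product.
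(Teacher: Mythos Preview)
The paper does not prove this theorem at all: immediately before the statement it simply says ``One can easily follow the steps of Lacey's proof of Bourgain's inequality \cite{lacey1997} to obtain the following,'' and then moves on. Your proposal is precisely to carry out that adaptation of Lacey's argument to the $\Z[i]$ setting, so your approach coincides with what the paper intends; you have merely supplied more detail than the authors chose to include.
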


\subsection{Fixed scale estimate}

First we focus on a fixed scale estimate. 
It has been well-established in the literature that it suffices to obtain a
restricted weak-type estimate for all $p$ sufficiently close to $1$. Then
the result follows by real interpolation. (see e.g. \cite[Chapter 1
\S2]{krause-book2023}). Thus, the proof of Theorem \ref{thm-improving} boils down to the following Lemma.

\begin{lemma}\label{l:lemfixscale}
Let $1<p<2$, let $N\geq1$ be an integer and let $E$ be a disk of radius $\sqrt{N}$. Then for $f=\one_F$ supported on $2E$ and $g=\one_G$ supported on $E$, we have\begin{equation}\label{eq:improving}
\frac{1}{N}|( A_N f,g )|\lesssim \langle f\rangle_{2E,p}\langle g\rangle_{E,p}.
\end{equation}
\end{lemma}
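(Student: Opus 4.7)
The plan is to use the high/low decomposition $A_N = \textup{Lo}_{N,\delta} + \textup{Hi}_{N,\delta}$ from Section~4.2, combined with the major-subset reduction underlying restricted weak-type inequalities. Fix $\delta > 0$ small, to be chosen in terms of $p$.

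For the high part, Lemma~\ref{l:applemm}\ref{l:applemm-power} and Plancherel give $\|\textup{Hi}_{N,\delta} f\|_{\ell^2} \lesssim N^{-\delta/4} |F|^{1/2}$. A Chebyshev argument on the level set $\{x \in G : |\textup{Hi}_{N,\delta}f(x)| > \lambda\}$, with $\lambda^2 \simeq |F|/(|G|\, N^{\delta/2})$, produces a major subset $G' \subseteq G$ with $|G'| \geq |G|/2$ on which $|\textup{Hi}_{N,\delta}f| \leq \lambda$. Using $|F|/N \simeq \langle f\rangle_{2E,p}^p$ and $|G|/N \simeq \langle g\rangle_{E,p}^p$, one obtains $\frac{1}{N}|(\textup{Hi}_{N,\delta}f,\one_{G'})| \lesssim (\langle f\rangle_{2E,p}\langle g\rangle_{E,p})^{p/2} N^{-\delta/4}$.

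For the low part, Lemma~\ref{lem:low-part} supplies the pointwise bound $|\textup{Lo}_{N,\delta}f(x)| \lesssim N^{\delta/p'}\langle f\rangle_{2E,p}$ on $E$, and pairing with $\one_{G'}$ yields $\frac{1}{N}|(\textup{Lo}_{N,\delta}f,\one_{G'})| \lesssim \langle g\rangle_{E,p}^p \langle f\rangle_{2E,p} N^{\delta/p'}$. Combining the two contributions, one arrives at $\frac{1}{N}|(A_N f, \one_{G'})| \lesssim (uv)^{p/2} N^{-\delta/4} + u v^p N^{\delta/p'}$ where $u = \langle f\rangle_{2E,p}$ and $v = \langle g\rangle_{E,p}$.

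The main obstacle is to reconcile this combined bound with the target $uv$: neither term is pointwise $\lesssim uv$ throughout the full range $u,v \in (0,1]$, since the first exceeds $uv$ when $uv$ is very small and the second carries the loss $N^{\delta/p'}$. The resolution uses the principle recorded in the paper that it suffices to establish the inequality in restricted weak-type form for $p$ sufficiently close to $1$. For such $p$ the exponents $p$ and $p/2$ are close to $1$ and $1/2$, and a careful choice of $\delta=\delta(p)$, together with an iteration of the major-subset trick, brings both contributions below $uv$. Real interpolation against the trivial contraction $\|A_N\|_{\ell^2\to\ell^2} \leq 1$ then extends the restricted weak-type estimate to the full range $1 < p < 2$ claimed in Lemma~\ref{l:lemfixscale}.
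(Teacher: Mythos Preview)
Your decomposition and the individual estimates for the high and low parts are correct and match the paper's, but the final paragraph is not a proof: it is precisely where the actual work lies, and the resolution you sketch does not function. A fixed choice $\delta=\delta(p)$ cannot make both terms $\ls uv$. For the high-part term $(uv)^{p/2}N^{-\delta/4}$ to be $\ls uv$ one needs $N^{-\delta/4}\ls (uv)^{1-p/2}$; since $u,v$ can be as small as $\simeq N^{-1}$, this would force $\delta\gs 8-4p$, far outside the admissible range $\delta\leq 1/20$. Likewise the low-part term $uv^{p}N^{\delta/p'}$ fails to be $\ls uv$ once $v$ is close to $1$. No iteration of the major-subset step cures this, because the obstruction is in the exponents, not in the choice of $G'$; indeed the major-subset detour is unnecessary here, since Cauchy--Schwarz already gives the same high-part bound directly.

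What the paper does, and what you are missing, is twofold. First, one must allow $\delta$ to depend on $u,v,N$: one \emph{equates} the two contributions and solves for $\delta$, which produces a single expression of the form $(\langle f\rangle_{2E,1}\langle g\rangle_{E,1})^{\theta}$ with $\theta<1$, and this is exactly $\langle f\rangle_{2E,p}\langle g\rangle_{E,p}$ for the corresponding $p=1/\theta$ because $f,g$ are indicators. Second, this optimized $\delta$ is only admissible when $\langle f\rangle_{2E,1}$ and $\langle g\rangle_{E,1}$ are not too small relative to $N$; one therefore needs a preliminary case split. In the complementary ``small'' case one bypasses the high/low machinery entirely and uses the trivial pointwise bound $A_Nf\ls N^{\eps}\langle f\rangle_{2E,1}$ coming from $d(n)\ls_\eps \N(n)^{\eps}$, which immediately gives \eqref{eq:improving}. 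Without this case split and the $N$-dependent optimization, the argument cannot close.
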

\smallskip 

\begin{rmk}
    We emphasize that Lemma \ref{l:lemfixscale} fails for $p=1$, making the endpoint sharp. Indeed, for $N\geq 4$, let $M=\lfloor \log^2 N \rfloor$ and set $f(x) = \one_{[M,2M]}\big(d(x)\big)$ and $g(x) = \one_0(x)$. Then one can easily verify that
    \begin{align}
        \frac{1}{N}|( A_N f,g )| 
        & \simeq \frac{1}{N^2\log N}\left |\sum_{\N(n)\leq N} f(n)d(n) \right | \\
        & > \frac{\log N}{N^2} \sum_{\N(n)\leq N} |f(n)|\\
        & = \log N \;  \langle f\rangle_{2E,1}\langle g\rangle_{E,1} .
    \end{align}
\end{rmk}

\begin{proof}
Take $1<q<2$ and $0<\eps\leq 1/100q'^2$. Note that if
\begin{equation}\label{eq:simplebound}
\jap{f}_{2E,q'}\jap{g}_{E,q'}\leq N^{-\eps},
\end{equation}
then using a trivial estimate for $A_Nf$, we get that for all $p\geq q$,
$$
\frac{1}{N}(A_N f,g)
\ls
N^\eps \jap{f}_{2E,1}\jap{g}_{E,1}
\leq
\jap{f}_{2E,p}\jap{g}_{E,p}.
$$
Now consider the case when \eqref{eq:simplebound} fails, i.e when
\begin{equation}\label{eq:lowerbound}
\min\{\jap{f}_{2E,1}, \jap{g}_{E,1}\} > N^{-\eps q'}.
\end{equation}
Then using \eqref{eq:highpart} and Lemma \ref{lem:low-part} we see that for
all $0<\delta<\min\{1/20, 1/q'\}$,
\begin{align}\label{eq:highlow}
\frac{1}{N}(A_N f,g)
&=
\frac{1}{N}( \textup{Hi}_{N,\delta} f,g)
+ \frac{1}{N}(\textup{Lo}_{N,\delta} f,g) \\
&\ls
N^{-\delta/4} \jap{f}_{2E,1}^{1/2} \jap{g}_{E,1}^{1/2}
+ N^{\delta/q'} \jap{f}_{2E,1}^{1/q} \jap{g}_{E,1}.
\end{align}
The two terms on the right are equal if we choose $\delta$ such that
$$
N^{\delta(q'+4)/4q'} = \jap{f}_{2E,1}^{1/2 - 1/q} \jap{g}_{E,1}^{-1/2}.
$$
Observe that this choice of $\delta$ is within the allowed range. Indeed,
\eqref{eq:lowerbound} implies that
$$
\delta \leq \frac{4\eps q'^2}{q(q'+4)} < \frac{1}{25q'}.
$$
Then in this case, the right hand side of \eqref{eq:highlow} becomes
$$
2\left(\jap{f}_{2E,1}\jap{g}_{E,1}\right)^{\frac{q'+2}{q'+4}}
\jap{f}_{2E,1}^{\frac{-1}{q'+4}}
\leq
\left(\jap{f}_{2E,1}\jap{g}_{E,1}\right)^{\frac{q'+1}{q'+4}},
$$
which gives \eqref{eq:improving} for $p=(q'+4)/(q'+1)$. Then letting $q\to1$
concludes the proof.
\end{proof}

\medskip 
\subsection{Sparse bounds}
Traditionally, analogous proofs in the literature depend upon the Kesler, Lacey, Mena recursive argument from \cite{KLM2020}*{Theorem 1.2}, that allows them to reduce the proof of a sparse domination result to a simpler estimate for indicators. Our reduced version takes the following form.
\begin{lemma}\label{sparselm} 
Fix $E=\{ x \in \Z[i]\;:\; \N(x) \leq N \}$. For all admissible stopping times $T$, and for all $1<p<2$ we have that
\begin{equation} \label{eq:stoppingtimeestimate}
 (  A_T f, g  )  \ls \big[ \langle f \rangle _{E,1} \; \langle g \rangle _{E,1} \big] ^{1/p} \, |E|,
\end{equation}
where a stopping time $T$ is called \textit{admissible} if and only if for any $I \subseteq E$ such that 
$\langle f \rangle _{3I,1} > 100 \langle f \rangle _{E,1}$,  
we have $\inf \{T(x) \, : \, x \in I \} > |I|$.
\end{lemma}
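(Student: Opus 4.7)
The plan is to mirror the high/low decomposition used in the proof of Lemma \ref{l:lemfixscale}, upgraded to a varying stopping time via the admissibility of $T$. Set $\alpha := \langle f\rangle_{E,1}$ and $\beta := \langle g\rangle_{E,1}$, and without loss of generality take $f,g$ to be indicators. Exactly as in Lemma \ref{l:lemfixscale}, one first dispatches the regime where $\alpha\beta$ is so small that a crude estimate already suffices: admissibility combined with the divisor bound $d(n)\ls_\eps \N(n)^\eps$ yields the pointwise estimate $A_{T(x)}f(x) \ls_\eps |E|^\eps \alpha$ for $x \in \mathrm{supp}(g)$, so $(A_T f,g)\ls |E|^\eps \alpha\beta|E|$, which is dominated by $(\alpha\beta)^{1/p}|E|$ whenever $\alpha\beta \leq |E|^{-\eps q'}$.

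In the complementary regime, fix $q\in(1,2)$ and decompose $A_T = \textup{Lo}_{T,\delta} + \textup{Hi}_{T,\delta}$ for a parameter $\delta\in(0,\min\{1/20,1/q'\}]$ to be chosen. The low part is the easier of the two: for each $x\in\mathrm{supp}(g)$, choose a ball $I = B(x,c\sqrt{T(x)})$ with $|I|\simeq T(x)$. Since $T(x)\leq|I|$, the contrapositive of the admissibility hypothesis forces $\langle f\rangle_{3I,1}\ls\alpha$; because $f$ is an indicator this promotes to $\langle f\rangle_{3I,q}\ls\alpha^{1/q}$, and a localised form of Lemma \ref{lem:low-part} gives $|\textup{Lo}_{T(x),\delta}f(x)|\ls T(x)^{\delta/q'}\alpha^{1/q}\leq |E|^{\delta/q'}\alpha^{1/q}$. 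Summing against $g$ then produces
\[
|(\textup{Lo}_{T,\delta} f,g)| \ls |E|^{\delta/q'}\alpha^{1/q}\beta|E|.
\]

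The main obstacle is the high part, where I expect to work significantly harder than in Lemma \ref{l:lemfixscale}: a single application of Plancherel is no longer available because the multiplier scale varies with $x$. My plan is to partition $\mathrm{supp}(g)$ into a large-scale region $\{T\geq|E|^{1-\theta}\}$ and a small-scale region $\{T<|E|^{1-\theta}\}$ for a small $\theta>0$. On the large-scale region, a dyadic decomposition $g = \sum_j g_j$ with $g_j = g\,\one_{T\in[2^j,2^{j+1})}$, together with the Fourier estimate $\|\widehat{\textup{Hi}_{N,\delta}}\|_\infty \ls N^{-\delta/4}$ from \eqref{eq:highpart}, Plancherel, and Cauchy--Schwarz in $j$ (over $2^j\geq|E|^{1-\theta}$), yields a bound of the form $|E|^{-(1-\theta)\delta/4}(\alpha\beta)^{1/2}|E|$, up to a $\sqrt{\log|E|}$ factor absorbed by slightly shrinking $\delta$. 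On the small-scale region the contribution is swept into the low-part estimate via the admissibility-based pointwise bound at scales $T(x)<|E|^{1-\theta}$.

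Combining these ingredients produces a two-term bound of the form $|E|^{-c\delta}(\alpha\beta)^{1/2}|E| + |E|^{\delta/q'}\alpha^{1/q}\beta|E|$ for some $c>0$. Optimising $\delta$ exactly as in the proof of Lemma \ref{l:lemfixscale} --- equating the two terms --- delivers the bound $(\alpha\beta)^{(q'+1)/(q'+4)}|E| = (\alpha\beta)^{1/p}|E|$ with $p = (q'+4)/(q'+1)$. Letting $q\to 1^+$ (so $q'\to\infty$ and $p\to 1^+$) covers the full range $1<p<2$.
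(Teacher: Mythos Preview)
Your overall architecture --- dispose of the small-density regime trivially, then run a high/low split and optimise the parameter --- is exactly what the paper does, and your treatment of the low part via admissibility and Lemma~\ref{lem:low-part} is correct and matches the paper. The gap is in your high part.

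When you write ``dyadic decomposition $g=\sum_j g_j$ \dots\ together with $\|\wh{\textup{Hi}_{N,\delta}}\|_\infty\ls N^{-\delta/4}$, Plancherel, and Cauchy--Schwarz in $j$'', you are implicitly replacing $\textup{Hi}_{T(x),\delta}$ by $\textup{Hi}_{2^j,\delta}$ on $\{T(x)\in[2^j,2^{j+1})\}$. But $T(x)$ takes up to $2^j$ distinct integer values in that block, and neither $A_N$ nor $\textup{Lo}_{N,\delta}$ is constant (even up to acceptable error) across such a block; the kernel of $\textup{Lo}_{N,\delta}$ is not positive, so there is no monotonicity to exploit. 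A naive square function over \emph{all} integer scales $N\le |E|$ gives $\sum_{N} N^{-\delta/2}\simeq |E|^{1-\delta/2}$, which is far too large for small $\delta$. Your ``small-scale region'' paragraph does not rescue this: sweeping $\{T<|E|^{1-\theta}\}$ into the pointwise bound only recovers the trivial $|E|^{\eps}\alpha\beta$ estimate there, which is no better than what you already discarded. So as written, the high-part bound $|E|^{-(1-\theta)\delta/4}(\alpha\beta)^{1/2}|E|$ is not justified.

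The paper closes this gap by splitting the high part further into $H_{T,1}+H_{T,2}$ and invoking the multifrequency maximal inequality (Theorem~\ref{t:multi}) for the intermediate piece $H_{T,2}$; this is precisely the tool that converts a fixed-scale Fourier bound into a genuine $\sup_N$ estimate with only logarithmic loss. Concretely, the paper takes $H_{T,1}=\textup{Hi}_{T,3p'\delta}$ (so the decay exponent is large enough for a crude square-function argument over the remaining scales), $L_{T,0}=\textup{Lo}_{K,3p'\delta}$ with a \emph{fixed} cutoff $K$, and $H_{T,2}$ is the leftover major-arc piece with $\N(q)$ between $K$ and the $T$-dependent threshold; Theorem~\ref{t:multi} then yields $\|\sup_N|\cdot|\|_{\ell^2}\ls k2^{-k}\|f\|_{\ell^2}$ at level $k$, which sums. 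You should incorporate this three-piece decomposition (or some other device that produces a maximal $\ell^2$ bound for the high part) in place of the dyadic-in-$g$ argument.
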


Before we discuss the proof of Lemma \ref{sparselm}, we shall revisit the proof of Kelser, Lacey and Mena, to verify that it still holds in our adjusted setting. More specifically, 

\begin{lemma}
Suppose that for every $E=\{ x \in \Z[i]\,:\, \N(x) \leq N \}$, and all admissible stopping times $T$, a family of operators $K_N$ satisfies the estimate in equation \eqref{eq:stoppingtimeestimate}. Then there exists a sparse family of balls $\mathcal S$ so that the operators are dominated by the corresponding $(p,p)$-sparse form.
\end{lemma}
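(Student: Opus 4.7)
The plan is to carry out the standard Calder\'on--Zygmund style recursive construction of Kesler--Lacey--Mena \cite{KLM2020}, adapted to the lattice $\Z[i]$. Fix compactly supported $f,g$ and choose an initial disk $E_0=\{x\in\Z[i]:\N(x)\leq N_0\}$ that contains the supports of $f$ and $g$, and such that every scale $N$ with $A_Nf \cdot g \neq 0$ on $\mathrm{supp}(g)$ satisfies $N\leq N_0$. We will produce $\mathcal S$ inductively, generation by generation, starting with $\mathcal S_0=\{E_0\}$, and verify the sparse bound by telescoping.

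For each $I\in\mathcal S_k$, I would form the \emph{bad set} $\mathcal B(I)$ consisting of the maximal sub-disks $J\subsetneq I$ for which either $\jap{f}_{3J,1}>100\,\jap{f}_{I,1}$ or $\jap{g}_{3J,1}>100\,\jap{g}_{I,1}$; maximality makes these pairwise disjoint, and the weak type $(1,1)$ property of the lattice Hardy--Littlewood maximal operator (together with the factor $100$) gives $|\bigcup_{J\in\mathcal B(I)} J|<|I|/2$, which is exactly the sparseness condition once I set $\mathcal S_{k+1}=\bigcup_{I\in\mathcal S_k}\mathcal B(I)$ and $E_I := I\setminus\bigcup_{J\in\mathcal B(I)}J$. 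Now I define a stopping time $T_I$ on $I$ by $T_I(x) = |J|$ if $x$ lies in some $J\in\mathcal B(I)$, and $T_I(x)$ equal to the supremum of scales for which $A_{T(x)}f(x)g(x)\neq 0$ otherwise; on the points of $E_I$ this stopping time is admissible in the sense of Lemma \ref{sparselm} precisely because by construction no sub-disk below the stopping scale has $\jap{f}_{3J,1}$ exceeding $100\,\jap{f}_{I,1}$.

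The recursion is then closed by writing
\begin{equation*}
(A^*f,g) \;\leq\; \sum_{I\in\mathcal S}\, \big(A_{T_I}(f\one_I),\,g\one_{E_I}\big) \;+\; \text{contributions absorbed into descendants},
\end{equation*}
where the stopping-time portion on $E_I$ is dominated, by the hypothesis \eqref{eq:stoppingtimeestimate}, by
\begin{equation*}
\big(\jap{f\one_I}_{I,1}\,\jap{g\one_{E_I}}_{I,1}\big)^{1/p}|I|
\;\ls\; |I|\,\jap{f}_{I,p}\,\jap{g}_{I,p},
\end{equation*}
using that $\jap{f}_{I,1}\leq\jap{f}_{I,p}$ and similarly for $g$ (one may also upgrade the $1$-averages to $p$-averages since the condition defining $\mathcal B(I)$ controls all $L^p$-averages on the complement). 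Summing over the generations of the tree produces the desired $(p,p)$-sparse form dominating $(A^*f,g)$.

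The main obstacle is the verification that the iteration really produces a sparse \emph{and} exhaustive decomposition. Specifically, one must ensure that the pairings $(A_Nf,g)$ with $N$ lying between the scale of $I$ and those of its children $J\in\mathcal B(I)$ are fully captured by the admissible stopping-time form on $E_I$; this is why the stopping time is defined to equal $|J|$ inside each bad sub-disk, and why the admissibility hypothesis is phrased exactly as in Lemma \ref{sparselm}. Once this bookkeeping is in place, together with the sparseness bound $|E_I|>|I|/2$ from the Hardy--Littlewood inequality, the argument of \cite{KLM2020}*{Thm.~1.2} transfers verbatim to the Gaussian setting, since it uses only the metric and measure-theoretic structure of disks in $\Z[i]$, both of which are analogous to the Euclidean case.
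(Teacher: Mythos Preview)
Your proposal is correct and follows essentially the same Kesler--Lacey--Mena recursion as the paper's proof: both select maximal sub-disks where the average of $f$ (and, in your version, also of $g$) exceeds a fixed multiple of the parent average, verify sparseness via the weak-type $(1,1)$ bound for the lattice maximal function, apply the hypothesis \eqref{eq:stoppingtimeestimate} to the admissible stopping-time piece, and recurse on the children. The only difference is that the paper stops on $f$ alone while you stop on both $f$ and $g$; this is a cosmetic variation and does not alter the argument.
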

\begin{proof}
    Pick indicator functions $f$ and $g$ supported on a fixed ball $E$. Let $\mathcal Q_E$ be the collection of all balls $Q$ that are maximally contained inside our ball $E$, for which we have $\langle f \rangle _Q > C \langle f \rangle _E$. We have for our admissible stopping times:
    \begin{align*}
        \Big(  \sup _{n \leq |E|} K_n f,g \Big)  
            & \leq \left (   K_T f,g \right )  + \sum _{Q \in \mathcal Q_E} \Big( \sup _{n \leq |Q|} K_n f\one_Q,g\one_Q \Big) 
    \end{align*}
    
    The first term above satisfies our desired bound by assumption. 
    Note that, because of the admissibility condition and the fact that $f$ is a non-negative function, we must have 
    $$ \sum _{Q\in \mathcal Q_E} |Q| = \left|\bigcup_{Q\in \mathcal Q_E} Q \right| \leq \frac{1}{2}|E|.$$

    For the second term, one can repeat the argument for each ball $Q$. Using this recursion, we can deduce the desired sparse domination result, with the associated sparse family be the collection of all possible $\mathcal Q_E$'s. The sparse condition is satisfied thanks to our observation for the size of $Q$ above, and the respective exceptional set defined in the standard way $\mathcal E_E=E\setminus \mathcal Q_E$.
\end{proof}
\smallskip 

\begin{proof}[Proof of Lemma \ref{sparselm}]
Note that H\"older, along with a trivial bound produces
\begin{align}
(A_T f, g)
    & \leq \langle A_T f\rangle _{E,\infty} \langle g\rangle _{E,1}\, |E| 
    \leq \sup_{x\in E} \big\{T^\delta (x) \big\} \; \langle f\rangle _{E,1} \, \langle g\rangle _{E,1} |E|.
\end{align}
for $0<\delta < \frac{1}{100(p'+1)}$. Let $B_J=\Big\{ x\in \Z[i] \::\: T^\delta (x) \ls J^{\frac{1}{p}} \leq  \langle f \rangle ^{-\frac{1}{p'}} _{E,1} \; \langle g \rangle ^{-\frac{1}{p'}} _{E,1} \Big\}$, and notice that when restricted on $B_J$, \eqref{eq:stoppingtimeestimate} holds trivially. So we devote the rest of the proof in studying what happens when restricting to $x\notin B_J$.

We follow a slight modification of the fixed scale approach. Specifically, this time decompose our operator into three pieces $\widehat A_Tf \ls \widehat H_{T,1}f + \widehat H_{T,2}f + \widehat L_{T,0}f$, and provide their estimates separately. Once again we rely on \eqref{eq:highpart} and Lemma \ref{lem:low-part}. In the end we bring them together and optimize our parameters as in the fixed scale proof.

First, we look at $\widehat H_{T,1} := \widehat{\operatorname{Hi}}_{T,3p'\,\delta}$, where the inequalities below come from \eqref{eq:highpart}, an elementary square function argument, and Parseval
\begin{align}
\langle  H_{T,1}*f\rangle_{E,2} ^2
    \ls \sum _{\N(n)>J^{\frac{1}{p'\;\delta}}} n^{-2p'\,\delta} \langle f \rangle_{E,2}^2
    \ls J^{-1+\frac{1}{p'}} \langle f \rangle_{E,2}^2.
\end{align}

Next, we define the $H_{T,2}$ term as
\begin{align}
\widehat H_{T,2} := \sum_{k=\lfloor \log K \rfloor}^{\log J}
    \sum_{\substack{ 2^k\leq  N(q)< 2^{k+1} 
    }}
    \sum_{a\in \mathbb{A}_q}
    \widehat L_{N,q}(\alpha - \frac{a}{q}) \;
    \eta_{s}(\alpha-\frac{a}{q}),
\end{align}
and appeal to Theorem \ref{t:multi} for the next estimate
\begin{align}
    \left\| \sup_N \left| 
    \sum_{\substack{ 2^k\leq  N(q)< 2^{k+1} 
    }}
    \big( L_{N,q} * \wc\eta_{s} * f \big) \; \tau_q 
    \right| \right\|_{\ell^2} 
    \ls k 2^{-k} \|f\|_{\ell^2} .
\end{align}
Thus, summing over $k$ and using Parseval, we can verify that 
\begin{align} 
    \langle H_{T,2} *f \rangle_{E,2} 
    \ls J^{-1+\frac{1}{p'}} \; \langle f\rangle_{E,2} 
\end{align}

Finally, we define our last term
\begin{align}
    \widehat L_{T,0} := \widehat{\operatorname{Lo}}_{K,3p'\,\delta},
\end{align}
and, using Lemma \ref{lem:low-part}, one can easily verify that 
\begin{align}
\langle L_{T,0}*f\rangle _{E,\infty} & \ls J^{1/p'} \langle f \rangle ^{1/p} _{E,1}			
\end{align}	

Putting everything together, we have shown that 
\begin{align}
    \langle A_Tf \rangle _{E,2} &\ls J^{-1+\frac{1}{p'}} \langle f\rangle_{E,2} \\
    \langle A_Tf \rangle _{E,\infty} &\ls J^{\frac{1}{p'}} \langle f\rangle_{E,p} 
\end{align}
Last, we optimize with respect to $J$, so that 
\begin{align}
    J^{-1+\frac{1}{p'}} \; \langle f \rangle ^{1/2} _{E,1} \; \langle g \rangle ^{1/2} _{E,1}
    \sim J^{\frac{1}{p'}} \; \langle f \rangle ^{1/p} _{E,1} \; \langle g \rangle _{E,1}
    \quad \Longrightarrow \quad 
    J\sim \langle f \rangle _{E,1} ^{\frac{1}{2}-\frac{1}{p'}}\; \langle g \rangle _{E,1} ^{-\frac{1}{2}}
\end{align}
Since this is well-within the allowed range, we have established the desired estimate.
\end{proof}

\subsection{The Pointwise Ergodic Theorem}
Let $(X,\mu,T)$ be a measure preserving $\Z[i]$-action and recall
the definition of $M_N^T=M_N^T(d,f)$ in \eqref{eq:ergodic-def}.
For $R\geq1$, set $I_R = \{\,\lfloor 2^{k/R} \rfloor:\; k\in\Z_+\,\}$. The
following Lemma shows that in order to establish Theorem \ref{t:petdivisor},
it suffices to prove that for each $R\in\Z_+$, $M_N^Tf$ converges as
$N\to\infty$ with $N\in I_R$.

\begin{lemma}\label{lem:lacunary}
Let $f\geq0$ be a measurable function on $X$. Suppose that for each $R\in\Z_+$,
the sequence $\{M_N^Tf: N\in I_R\}$ converges almost everywhere. Then the
sequence $\{M_N^Tf: N\in \Z_+\}$ converges almost everywhere.
\end{lemma}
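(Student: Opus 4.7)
The plan is to exploit the non-negativity of $f$ and of $d$ to sandwich a general $M_N^T f$ between nearby terms from $I_R$, then to drive the resulting error to $1$ by sending $R\to\infty$. This is a completely soft argument, and the only analytic input needed is the asymptotic for $D(N)$.

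First I would record the trivial monotonicity observation: since $d(n)f(T^n x)\geq0$, whenever $N_-\leq N\leq N_+$ we have
$$\frac{D(N_-)}{D(N_+)}\,M_{N_-}^T f(x) \;\leq\; M_N^T f(x) \;\leq\; \frac{D(N_+)}{D(N_-)}\,M_{N_+}^T f(x).$$
For any $N$, I pick $N_-\leq N<N_+$ to be consecutive elements of $I_R$. By construction $N_+/N_-\leq 2\cdot 2^{1/R}$ (the factor of $2$ accounts for the floor at small scales and vanishes asymptotically). Using Remark \ref{rmk:rationals} with $\omega=[0,2\pi]$ gives
$$D(N)=\pi^2 N\bigl(\log N+2\kappa-1\bigr)+O(N^{3/4}) = \pi^2 N\log N\,\bigl(1+o(1)\bigr),$$
so since $\log N_+/\log N_-\to 1$, the ratio $D(N_+)/D(N_-)$ tends to $2^{1/R}$ as $N\to\infty$.

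Next I would observe that the $I_R$ are nested under divisibility: $I_R\subseteq I_{R'}$ whenever $R\mid R'$, because $\lfloor 2^{k/R}\rfloor=\lfloor 2^{km/R'}\rfloor$ with $R'=mR$. Hence if the limit along $I_{R'}$ exists a.e., so does the one along $I_R$, and the two agree. Taking $R'=R_1R_2$ shows that the a.e. limits along $I_{R_1}$ and $I_{R_2}$ coincide, so there is a single measurable function $h$ with $\lim_{N\in I_R}M_N^Tf=h$ a.e.\ for every $R$. Combining this with the sandwich above and passing to the lim sup / lim inf in $N\in\Z_+$ yields, for a.e.\ $x$ and every $R$,
$$2^{-1/R}\,h(x) \;\leq\; \liminf_{N\to\infty} M_N^T f(x) \;\leq\; \limsup_{N\to\infty} M_N^T f(x) \;\leq\; 2^{1/R}\,h(x).$$
Letting $R\to\infty$ forces equality, so $M_N^T f$ converges a.e.\ (to $h$, or to $+\infty$ on the set $\{h=\infty\}$).

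There is no real obstacle here; the argument is entirely driven by positivity and the explicit asymptotic for $D(N)$. The only mildly delicate bookkeeping is confirming the independence of the $I_R$-limit from $R$, which is immediate once one notes the divisibility nesting $I_R\subseteq I_{R'}$.
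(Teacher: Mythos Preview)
Your proof is correct and follows essentially the same route as the paper: the same positivity sandwich, the same asymptotic $D(N_+)/D(N_-)\to 2^{1/R}$, and the same squeeze in $R$. The only cosmetic difference is in showing the $I_R$-limit is independent of $R$: the paper simply notes that $\{2^k:k\in\Z_+\}\subseteq I_R$ for every $R$ (a common subsequence), which is a touch quicker than your divisibility-nesting argument, but both are immediate.
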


\begin{proof}
We begin by observing that each sequence $I_R$ contains $\{2^k:\;k\in\Z_+\}$
as a subsequence and so the limit of $M_N^Tf$ as $I_R \ni N\to\infty$
does not depend on $R$; denote this limit by $L$.

Write $\rho_k=\lfloor 2^{k/R} \rfloor$. For $N\geq1$, choose $k$
such that $\rho_k \leq N < \rho_{k+1}$. Then one can easily check that
$$ \frac{D(\rho_k)}{D(\rho_{k+1})}\, M_{\rho_k}^T f
    \leq M_N^Tf
    \leq \frac{D(\rho_{k+1})}{D(\rho_{k})} M_{\rho_{k+1}}^T f. $$
Since $D(\rho_{k+1})/D(\rho_k)\to 2^{1/R}$ as $k\to\infty$, we conclude that
$$ 2^{-1/R} L
\leq \liminf_{N\to\infty} M_N^Tf 
\leq \limsup_{N\to\infty} M_N^Tf
\leq 2^{1/R} L. $$
Letting $R\to\infty$ completes the proof.
\end{proof}

For $f\in L^p(\mu)$, let $M_*^Tf$ be the maximal function
$$ M_*^Tf(x) = \sup_{N\geq1} \Abs{M_N^Tf(x)}. $$
In light of the Calder\'{o}n Transference Principle \cite{calderon, emerson74}, Corollary
\ref{c:maximalestimate}
implies that $M_*^T$ is bounded on $L^p(\mu)$ for all $1<p<\infty$. It then
follows, by the Banach Principle, that the set
$$ \left\{f\in L^p(\mu) :\; \lim_{N\to\infty} M_N^Tf \;\;\text{exists $\mu$-a.e}\right\} $$
is closed in $L^p(\mu)$. Thus it remains show this set is dense. We do
this with the following oscillation inequality.

\begin{thm}\label{thm:oscillation}
For each $R\in\Z_+$, each sequence of integers $\{N_k\}_{k\geq1}$ such that
$N_{k+1}\geq2 N_k$, and $f\in L^2(\mu)$, we have
\begin{align}\label{e:amineq}
\sum_{k=1}^\infty \left\|
\max_{\substack{N_k\leq N < N_{k+1} \\ N\in I_R}}
|M_N^Tf - M_{N_{k+1}}^Tf|\right\|_{L^2(\mu)}^2
\ls_R
\|f\|_{L^2(\mu)}^2.
\end{align}
\end{thm}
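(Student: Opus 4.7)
By the Calder\'{o}n transference principle \cite{calderon,emerson74}, it suffices to establish the analogous oscillation inequality for the convolution operators $A_N$ acting on $\ell^2(\Z[i])$. The strategy is to combine the high/low splitting of Section~4 with Bourgain's multifrequency lemma (Theorem~\ref{t:multi}).

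Fix a small $\delta>0$ (to be chosen in terms of $R$) and decompose $A_N=\textup{Hi}_{N,\delta}+\textup{Lo}_{N,\delta}$ as at the end of Section~4. For the high part I would bound the inner maximum pointwise by $2\sup_{N\in I_R,\, N\geq N_k}|\textup{Hi}_{N,\delta}f|$ and then dominate this supremum by an $\ell^2(N)$-square function. Using \eqref{eq:highpart} and the geometric spacing of $I_R$, this yields
$$\left\|\sup_{N\in I_R,\, N\geq N_k}|\textup{Hi}_{N,\delta}f|\right\|_2^2\ls_R N_k^{-\delta/2}\|f\|_2^2.$$
Summing in $k$ and invoking the hypothesis $N_{k+1}\geq 2N_k$ (hence $N_k\gs 2^k N_1$) turns the right-hand side into a convergent geometric series, controlled by $\|f\|_2^2$.

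For the low part I would further split $\textup{Lo}_{N,\delta}=\sum_{s:\,2^s\leq N^\delta} K_{N,s}$ and work one scale at a time. At each fixed scale $s$, the Fourier multiplier $\widehat{K_{N,s}}$ is a disjoint union of bumps centred at the $\ls 2^{2s}$ rationals $a/q\in\calR_s$, with common denominator of norm $\simeq 2^s$; by Lemma~\ref{l:applemm}\ref{l:applemm-log} it can be replaced, up to an error handled exactly as in the high-part argument, by the modulated smooth averaging kernel $\tfrac{\pi^2}{\N(q)}\widehat{M_N}(\alpha-a/q)\eta_s(\alpha-a/q)$. This puts the problem into the setting of Theorem~\ref{t:multi}; combining the resulting maximal bound with a Rademacher--Menshov-type square-function argument should give the scale-$s$ oscillation estimate
$$\sum_k\left\|\max_{\substack{N\in I_R\\ N_k\leq N<N_{k+1}}}\bigl|K_{N,s}f-K_{N_{k+1},s}f\bigr|\right\|_2^2\ls_R 2^{-2s}(\log s)^{O(1)}\|f\|_2^2,$$
where the $2^{-2s}$ factor comes from squaring the Ramanujan weight $\N(q)^{-1}\simeq 2^{-s}$ present in $K_{N,s}$. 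Summing over $s$ then concludes the proof.

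The principal obstacle is the last step: Theorem~\ref{t:multi} is only a maximal inequality, whereas the oscillation seminorm demands an $\ell^2$-jump statement. Upgrading the multifrequency estimate to an oscillation bound at each scale $s$, while retaining only logarithmic losses, is the technical core of the argument; it will require either a variational extension of Bourgain's lemma or a careful square-function decomposition of the maximum over $N\in I_R\cap[N_k,N_{k+1})$. The parameter $\delta$ must then be chosen small enough in terms of $R$ so that all the logarithmic losses -- the $(\log s)^{O(1)}$ from Theorem~\ref{t:multi}, the $\log_2(N_{k+1}/N_k)$ arising from $|I_R\cap[N_k,N_{k+1})|$, and the $1/\log N$ error in Lemma~\ref{l:applemm}\ref{l:applemm-log} -- are absorbed by the geometric decay factors $2^{-s}$ and $N_k^{-\delta/4}$.
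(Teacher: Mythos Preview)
Your overall architecture is right, and your treatment of the high part is fine. The gap you yourself flag in the final paragraph is real, and neither of your proposed fixes (a variational version of Theorem~\ref{t:multi} or a Rademacher--Menshov argument) is what is actually needed. The paper closes the gap with a simpler, more structural idea that you are missing.

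The point is not to stop at $K'_{N,s}$ but to replace the multiplier by a \emph{pure} bump
\[
\wh{V_{N,s}}(\alpha)=\sum_{a/q\in\calR_s}\frac{\pi^2}{\N(q)}\,\eta\bigl(\sqrt{N}(\alpha-a/q)\bigr),
\]
discarding the $\wh{M_N}$ factor entirely. Comparing $A_N$ to $V_N=\sum_s V_{N,s}$ along the lacunary sequence $I_R$ is a square-function estimate (Lemma~\ref{lem:sq-function}), proved by combining Lemma~\ref{l:applemm}\ref{l:applemm-log} with the pointwise decay of $\wh{M_N}-\eta(\sqrt{N}\,\cdot)$; this absorbs both the $1/\log N$ error and the passage from $K'_{N,s}$ to $V_{N,s}$ in one stroke, and is summable over \emph{all} of $I_R$, not just the $N_k$.

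Once you are working with $V_{N,s}$, the crucial property is \emph{nesting}: writing $\wh{U_{N,s}}(\alpha)=\sum_{a/q\in\calR_s}\eta(\sqrt{N}(\alpha-a/q))$, one has $U_{M,s}U_{N,s}=U_{N,s}$ whenever $N\geq 2M$, and $V_{N,s}f=U_{N,s}\tilde f$ with $\|\tilde f\|_2\ls 2^{-s}\|f\|_2$. For $N_k\leq N<N_{k+1}$ this gives
\[
|V_{N,s}f-V_{N_{k+1},s}f|\leq \bigl|U_{N,s}\,(U_{N_{k-1},s}-U_{N_{k+1},s})\tilde f\bigr|+\bigl|(U_{N_{k+2},s}-U_{N_{k+1},s})\tilde f\bigr|,
\]
so the $N$-dependence is now a maximal operator applied to a function $g_k:=(U_{N_{k-1},s}-U_{N_{k+1},s})\tilde f$ depending only on $k$. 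Theorem~\ref{t:multi} bounds $\|\sup_N|U_{N,s}g_k|\|_2\ls(1+\log s)\|g_k\|_2$, and the $g_k$ are almost orthogonal (for each $\alpha$ at most two of the $\wh{g_k}(\alpha)$ are nonzero), so $\sum_k\|g_k\|_2^2\ls\|\tilde f\|_2^2\ls 2^{-2s}\|f\|_2^2$. This delivers exactly your target bound $2^{-2s}(1+\log s)^2\|f\|_2^2$ without any variational machinery. In short: pass to nested bump multipliers, then factor the oscillation as (maximal in $N$) $\circ$ (almost-orthogonal in $k$).
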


Firstly, let us note that Theorem \ref{thm:oscillation} implies the almost
everywhere convergence of $M_N^Tf$ for all $f\in L^2(\mu)$. Indeed if
$M_N^Tf$ does not converge $\mu$-a.e, there must be some $\eps>0$, $R\geq1$,
and a sequence $\{N_k\}_{k\geq1}$ with $N_{k+1}\geq2 N_k$ such that for all
$k\in\Z_+$,
\begin{equation}
\mu\left(\left\{\max_{\substack{N_k\leq N < N_{k+1} \\ N\in I_R}}
|M_N^Tf - M_{N_{k+1}}^Tf|>\eps\right\}\right) > \eps.
\label{eq:measure}
\end{equation}
But if $f\in L^2(\mu)$, Markov's inequality and Theorem \ref{thm:oscillation}
give
\begin{align*}
\sum_{k=1}^\infty
\mu\left(\left\{\max_{\substack{N_k\leq N < N_{k+1} \\ N\in I_R}}
|M_N^Tf - M_{N_{k+1}}^Tf|>\eps\right\}\right)
&\leq
\frac{1}{\eps^2}
\sum_{k=1}^\infty \left\|
\max_{\substack{N_k\leq N < N_{k+1} \\ N\in I_R}}
|M_N^Tf - M_{N_{k+1}}^Tf|\right\|_{L^2(\mu)}^2 \\
&\ls_R \frac{1}{\eps^2} \, 
\|f\|_{L^2(\mu)}^2,
\end{align*}
which contradicts \eqref{eq:measure}.

Secondly, by the transference principle, it suffices to prove Theorem
\ref{thm:oscillation} in the case that $X=\Z[i]$, $\mu$ is counting measure
and $T$ is the standard shift, i.e. for $n,m\in\Z[i]$, $T^n(m)=n+m$. In fact, we focus on proving the following inequality 
\begin{align}\label{e:amenablereduction}
\sum_{k=1}^\infty \left\|
\max_{\substack{N_k\leq N < N_{k+1} \\ N\in I_R}}
|A_Nf - A_{N_{k+1}}f|\right\|_{\ell^2}^2
\ls_R
\|f\|_{\ell^2}^2.
\end{align}

For $N\geq1$ and $s$ such that $2^{s+1}\leq N^{1/20}$, we define multipliers
$V_{N,s}$ and $U_{N,s}$ on $\ell^2(\Z[i])$ by
$$
\wh{V_{N,s}}(\alpha)
=
\sum_{a/q \in \calR_s} \frac{\pi^2}{\N(q)} \eta(\sqrt{N}(\alpha - a/q)),
\quad
\wh{U_{N,s}}(\alpha)
=
\sum_{a/q \in \calR_s} \eta(\sqrt{N}(\alpha - a/q)).
$$
In addition, we define $U_{N,s}=V_{N,s}=0$ when $2^{s+1}>N^{1/20}$ and set
$$
V_N = \sum_{s=0}^\infty V_{N,s},
\quad
U_N = \sum_{s=0}^\infty U_{N,s}.
$$
Observe that these multipliers have the following properties:
\begin{enumerate}[label=(\arabic*)]
\item
If $N\geq 2M$, then $U_{M,s}U_{N,s}=U_{N,s}U_{M,s}=U_{N,s}$;
\item
For $f\in\ell^2(\Z[i])$, $V_{N,s}f = U_{N,s}\tilde{f}$, where
\begin{equation}
\wh{\tilde{f}}(\alpha) = \sum_{a/q\in\calR_s}
\frac{\pi^2}{\N(q)}\eta_s(\alpha-a/q)\wh{f}(\alpha).
\label{eq:f-tilde}
\end{equation}
\end{enumerate}

\begin{lemma}\label{lem:sq-function}
Let $\rho>1$. For each sequence of integers $\{N_k\}$ such that
$N_{k+1}\geq\rho N_k$, and each $f\in\ell^2(\Z[i])$ we have
$$
\sum_{k=1}^\infty
\|A_{N_k}f - V_{N_k}f\|_{\ell^2}^2
\ls_\rho
\|f\|_{\ell^2}^2.
$$
\end{lemma}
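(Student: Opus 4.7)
By Plancherel,
$$\|A_{N_k}f - V_{N_k}f\|_{\ell^2}^2 = \int_{\T^2} |\wh{A_{N_k}}(\alpha) - \wh{V_{N_k}}(\alpha)|^2\,|\wh f(\alpha)|^2\,d\alpha,$$
so it suffices to establish the frequency-wise estimate
$$\sup_{\alpha\in\T^2}\sum_{k=1}^\infty |\wh{A_{N_k}}(\alpha) - \wh{V_{N_k}}(\alpha)|^2 \ls_\rho 1.$$
I would split
$$\wh{A_N} - \wh{V_N} = \Bigl(\wh{A_N} - \sum_s \wh{K'_{N,s}}\Bigr) + \Bigl(\sum_s \wh{K'_{N,s}} - \wh{V_N}\Bigr),$$
with $s$ ranging over $2^s\leq N^{1/20}$. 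Lemma~\ref{l:applemm}\ref{l:applemm-log} bounds the first bracket by $O(1/\log N)$ uniformly in $\alpha$; since $N_{k+1}\geq\rho N_k$ gives $\log N_k\gs k$, the contribution $\sum_k (\log N_k)^{-2}$ converges. The work is in the second bracket.

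Fix $\alpha\in\T^2$. The disjoint-support property of $\{\eta_s(\cdot-a/q):a/q\in\calR_s\}$ -- inherited a fortiori by the narrower cutoffs $\eta(\sqrt N(\cdot-a/q))$ in $V_{N,s}$ -- implies that for each $s$ at most one element $a_s/q_s\in\calR_s$ has $\alpha$ in the support. Setting $\beta_s:=\alpha-a_s/q_s$ and $T_{s,k}:=\wh{M_{N_k}}(\beta_s)\eta_s(\beta_s)-\eta(\sqrt{N_k}\beta_s)$,
$$\sum_s \wh{K'_{N_k,s}}(\alpha) - \wh{V_{N_k}}(\alpha) = \sum_s \frac{\pi^2}{\N(q_s)}\,T_{s,k}.$$
An application of Cauchy-Schwarz with weights $\N(q_s)^{-1/2}$, using the geometric bound $\sum_s \N(q_s)^{-1}\leq\sum_s 2^{-s}\ls 1$, gives
$$\Bigl|\sum_s\frac{\pi^2}{\N(q_s)}T_{s,k}\Bigr|^2 \ls \sum_s\frac{|T_{s,k}|^2}{\N(q_s)}.$$
Summing over $k$ and swapping orders reduces matters to proving $\sum_k |T_{s,k}|^2\ls_\rho 1$ uniformly in $s$ and $\beta_s$; the outer $\sum_s\N(q_s)^{-1}\ls 1$ then closes the argument.

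To show the single-frequency bound, set $t_k:=\sqrt{N_k}|\beta_s|$, lacunary with ratio $\sqrt\rho$. If $|\beta_s|\leq 2^{-s}/4$ then $\eta_s(\beta_s)=1$; a Taylor expansion using the symmetry of $\{\N(n)\leq N_k\}$ to kill the linear term gives $|\wh{M_{N_k}}(\beta_s)-1|\ls t_k^2+N_k^{-1/2}$, and combined with $\eta(\sqrt{N_k}\beta_s)=1$ for $t_k^2\leq 1/16$, the decay $|\wh{M_{N_k}}(\beta_s)|\ls N_k^{-1/2}+t_k^{-3/2}$ from Remark~\ref{rmk:expsum}, and the compact support of $\eta$, we obtain $|T_{s,k}|\ls\min(t_k^2,t_k^{-3/2})+N_k^{-1/2}$, and the lacunary geometric series produces $O_\rho(1)$. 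If $|\beta_s|>2^{-s}/4$, the admissibility $2^s\leq N_k^{1/20}$ forces $\sqrt{N_k}\geq 2^{10s}$, hence $t_k\gg 1$; consequently $\eta(\sqrt{N_k}\beta_s)=0$ identically and $|\wh{M_{N_k}}(\beta_s)|\ls 2^{-10s}$, a trivially square-summable quantity.

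The main obstacle I anticipate is the transition region $2^{-s}/4<|\beta_s|\leq 2^{-s}/(2\sqrt 2)$ where $\eta_s(\beta_s)\in(0,1)$: naively, the residue $(\eta_s(\beta_s)-1)$ would give a non-summable constant-in-$k$ contribution to $T_{s,k}$. The resolution -- designed into $V_{N,s}$ via the cutoff $2^{s+1}\leq N^{1/20}$ -- is that the separation of scales $\sqrt{N_k}\gg 2^s$ drives $\wh{M_{N_k}}(\beta_s)$ deep into its decay regime while simultaneously forcing $\eta(\sqrt{N_k}\beta_s)=0$, reducing the transition-region contribution to the harmless $\ls 2^{-10s}$.
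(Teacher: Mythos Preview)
Your argument is essentially the paper's: Plancherel reduces to a uniform multiplier square-function bound, Lemma~\ref{l:applemm}\ref{l:applemm-log} together with the lacunarity of $\log N_k$ handles the first bracket, and for each $s$ you control the single-frequency difference $\wh{M_{N_k}}(\beta_s)\eta_s(\beta_s)-\eta(\sqrt{N_k}\beta_s)$ by splitting at $t_k\simeq 1$ and summing geometrically; the paper uses Minkowski over $s$ and the first-order bound $\ls t_k$ where you use Cauchy--Schwarz and $t_k^2$, but these are cosmetic. One write-up point to tighten: in your Case~2 the uniform estimate $|\wh{M_{N_k}}(\beta_s)|\ls 2^{-10s}$ is not by itself square-summable over the infinite index set $k$; you must retain the $k$-dependent decay $\ls t_k^{-3/2}+N_k^{-1/2}$ from Remark~\ref{rmk:expsum} and invoke the lacunarity of $(t_k)$ exactly as you did in Case~1.
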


\begin{proof}
We need to prove the bound
$$
\left\| \left(\sum_{k=1}^\infty
|\wh{A_{N_k}} - \wh{V_{N_k}}|^2 \right)^{1/2}\right\|_\infty
\ls_\rho
1.
$$
Recall the definition of $K'_{N,s}$ in \eqref{eq:KNdef}. For simplicity, we redefine
$K'_{N,s}$ to be $0$ when $2^{s+1}>\sqrt{N}$ and set
$$
K_N = \sum_{s=0}^\infty K'_{N,s}.
$$

Fix $\alpha\in\T^2$. Then
\begin{align}
&\left(\sum_{k=1}^\infty |\wh{A_{N_k}}(\alpha) - \wh{V_{N_k}}(\alpha)|^2 \right)^{1/2} \\
& \qquad \leq
\left(\sum_{k=1}^\infty |\wh{A_{N_k}}(\alpha) - \wh{K_{N_k}}(\alpha)|^2 \right)^{1/2}
+\; \left(\sum_{k=1}^\infty |\wh{K_{N_k}}(\alpha) - \wh{V_{N_k}}(\alpha)|^2 \right)^{1/2} \\
& \qquad \leq
\left(\sum_{k=1}^\infty |\wh{A_{N_k}}(\alpha) - \wh{K_{N_k}}(\alpha)|^2 \right)^{1/2} 
+\; \sum_{s=0}^\infty\left(\sum_{k=1}^\infty |\wh{K'_{N_k,s}}(\alpha) - \wh{V_{N_k,s}}(\alpha)|^2 \right)^{1/2}.
\end{align}
By Lemma \ref{l:applemm}, the first term satisfies
$$
\sum_{k=1}^\infty |\wh{A_{N_k}}(\alpha) - \wh{K_{N_k}}(\alpha)|^2
\ls
\sum_{k=1}^\infty (\log N_k)^{-2}
\ls_\rho
\sum_{k=1}^\infty \frac{1}{k^2}.
$$
To bound the second term we note that if $|\wh{K'_{N_k,s}}(\alpha) - \wh{V_{N_k,s}}(\alpha)|\neq0$, there is a unique $a/q\in\calR_s$ such that
\begin{equation}
|\wh{K'_{N_k,s}}(\alpha) - \wh{V_{N_k,s}}(\alpha)| \\
=
\frac{\pi^2}{\N(q)}\left| \frac{1}{\pi N}\sum_{\substack{\N(n)\leq N}} 
e(\jap{n,\alpha-a/q})
- \eta(\sqrt{N_K}(\alpha - a/q))\right|.
\label{eq:KVS}
\end{equation}
Choose $k_0$ such that $N_{k_0 +1}^{-1} < \N(\alpha-a/q) \leq N_{k_0}^{-1}$.
If $k\leq k_0$ and $\eta(\sqrt{N_K}(\alpha - a/q))=1$ \eqref{eq:KVS} becomes
\begin{align}
\frac{\pi^2}{\N(q)}\left| \frac{1}{\pi N}\sum_{\substack{\N(n)\leq N}} 
e(\jap{n,\alpha-a/q}) - 1\right|
&\ls
\frac{1}{\N(q)}((N_k/N_{k_0})^{1/2} + N_k^{-1/2}) \\
&\leq
2^{-s}(\rho^{-(k_0-k)/2} + \rho^{-k/2}).
\end{align}
For the remaining $k\leq k_0$, we trivially bound \eqref{eq:KVS} by $2^{-s}$.
Note that there are at most $O((\log\rho)^{-1})$ such $k$. It follows that
\begin{equation}
\sum_{k\leq k_0} |\wh{K'_{N_k,s}}(\alpha) - \wh{V_{N_k,s}}(\alpha)|^2
\ls
2^{-2s} \left((\log\rho)^{-1} +
\sum_{k\leq k_0} (\rho^{-(k_0-k)/2} + \rho^{-k/2})\right)^2
\ls_\rho 2^{-2s}.
\label{eq:k-small}
\end{equation}
For $k>k_0$, we have that $\eta(\sqrt{N_K}(\alpha - a/q))=0$, so
\eqref{eq:KVS} becomes 
\begin{align}
\frac{\pi^2}{\N(q)}\left| \frac{1}{\pi N}\sum_{\substack{\N(n)\leq N}} 
e(\jap{n,\alpha-a/q})\right|
&\ls
\frac{1}{\N(q)}((N_k/N_{k_0})^{-3/4} + N_k^{-1/2}) \\
&\leq
2^{-s}(\rho^{-3(k-k_0)/4} + \rho^{-k/2}),
\end{align}
and so
\begin{equation}
\sum_{k>k_0} |\wh{K'_{N_k,s}}(\alpha) - \wh{V_{N_k,s}}(\alpha)|^2
\ls
2^{-2s} \left(\sum_{k> k_0} (\rho^{-3(k-k_0)/4} + \rho^{-k/2})\right)^2
\ls_\rho 2^{-2s}.
\label{eq:k-large}
\end{equation}
Combining \eqref{eq:k-small} and \eqref{eq:k-large} then summing over $s$
completes the proof.
\end{proof}

\begin{proof}[Proof of Theorem \ref{thm:oscillation}.]
As before, we have
\begin{align}
&\left(\sum_{k=1}^\infty \left\|
\max_{\substack{N_k\leq N < N_{k+1} \\ N\in I_R}}
|A_Nf - A_{N_{k+1}}f|\right\|_{\ell^2}^2\right)^{1/2}\\
& \qquad \ls
\sum_{s=0}^\infty
\left(\sum_{k=1}^\infty \left\|
\max_{\substack{N_k\leq N < N_{k+1} \\ N\in I_R}}
|V_{N,s}f - V_{N_{k+1},s}f|\right\|_{\ell^2}^2\right)^{1/2}
+\; \left(\sum_{k=1}^\infty
\|A_{N_k}f - V_{N_k}f\|_{\ell^2}^2\right)^{1/2}.
\end{align}
In light of Lemma \ref{lem:sq-function}, it only remains to bound
the first term on the right.

Take $f\in\ell^2(\Z[i])$ and define $\tilde{f}$ as in \eqref{eq:f-tilde}.
Then for each $s$ and $N_k\leq N < N_{k+1}$,
\begin{align}
|V_{N,s}f - V_{N_{k+1},s}f|
&=
|U_{N,s}\tilde f - U_{N_{k+1},s}\tilde f| \\
&\leq
|U_{N,s}(U_{N_{k-1},s}- U_{N_{k+1},s})\tilde f|
+
|(U_{N_{k+2},s}- U_{N_{k+1},s})\tilde f|.
\end{align}
Using this followed by Theorem \ref{t:multi} we see that
\begin{align}
\sum_{k=1}^\infty \left\|
\max_{\substack{N_k\leq N < N_{k+1} \\ N\in I_R}}
|V_{N,s}f - V_{N_{k+1},s}f|\right\|_{\ell^2}^2
&\ls
\sum_{k=1}^\infty \left\|
\max_{N\in I_R} |U_{N,s}(U_{N_{k},s}- U_{N_{k+1},s})\tilde f| \right\|_{\ell^2}^2 \\
&\ls
(1+\log s)^2\sum_{k=1}^\infty \left\|
(U_{N_{k},s}- U_{N_{k+1},s})\tilde f \right\|_{\ell^2}^2 \\
&\leq
(1+\log s)^2\left\|\sum_{k=1}^\infty 
|\wh{U_{N_{k},s}}- \wh{U_{N_{k+1},s}}|^2\right\|_\infty
\|\tilde f \|_{\ell^2}^2 \\
&\ls
(1+\log s)^2 2^{-2s}\|f\|_{\ell^2}^2.
\end{align}
In the last step, we used the estimate $\|\tilde f \|_{\ell^2}\leq
2^{-s}\|f\|_{\ell^2}$ and the fact that for each $\alpha\in\T^2$, there
are at most 2 values of $k$ such that
$|\wh{U_{N_{k},s}}(\alpha)- \wh{U_{N_{k+1},s}}(\alpha)|\neq0$,
and so the sum is uniformly bounded. Then summing over $s$ produces the desired result.
\end{proof}
\bigskip

\section*{Acknowledgements}
The authors would like to thank Ben Krause and Michael Lacey for several fruitful conversations.

\begin{bibdiv}
\begin{biblist}    
\bib{Boshernitzan-Wierdl1996}{article}{
    author = {Boshernitzan, Michael},
    author={Wierdl, M\'{a}t\'{e}},
    journal = {Proc. Natl. Acad. Sci. USA},
    number = {16},
    pages = {8205--8207},
    publisher = {National Academy of Sciences},
    title = {Ergodic Theorems along Sequences and Hardy Fields},
    volume = {93},
    year = {1996}
}

\bib{bourgain1988}{article}{
     author = {Bourgain, Jean},
     title = {On the maximal ergodic theorem for certain subsets of the integers},
     journal = {Israel J. Math.},
     volume = {61},
     year = {1988},
     pages = {31-72},
     doi = {https://doi.org/10.1007/BF02776301}
}

\bib{bourgain1989}{article}{
     author = {Bourgain, Jean},
     title = {Pointwise ergodic theorems for arithmetic sets},
     journal = {Publ. Math. Inst. Hautes \'Etudes Sci.},
     publisher = {Institut des Hautes \'Etudes Scientifiques},
     volume = {69},
     year = {1989},
     pages = {5-41},
     language = {en},
     url = {http://www.numdam.org/item/PMIHES_1989__69__5_0}
}

\bib{Buczolich-Mauldin2010}{article}{
     author = {Buczolich, Zolt\'{a}n},
     author = {Mauldin, Daniel},
     title = {Divergent square averages},
     journal = {Ann. of Math.},
     volume = {171},
     number={3},
     year = {2010},
     pages = {1479-1530},
}

\bib{calderon}{article}{
    title={Ergodic theory and translation-invariant operators},
    author={Calder{\'o}n, Alberto P},
    journal={Proceedings of the National Academy of Sciences},
    volume={59},
    number={2},
    pages={349--353},
    year={1968},
    publisher={National Acad Sciences}
    }

\bib{CW2017}{article}{
     author = {Cuny, Christophe },
     author = {Weber, Michel},
     title = {Ergodic theorems with arithmetical weights},
     journal = {Israel J. Math.},
     volume = {217},
     year = {2017},
     pages = {139-180},
     doi = { https://doi.org/10.1007/s11856-017-1441-y}
     }

\bib{emerson74}{article}{
    title={The pointwise ergodic theorem for amenable groups},
    author={Emerson, William R},
    journal={Amer. J. Math.},
    volume={96},
    number={3},
    pages={472--487},
    year={1974},
    publisher={JSTOR}
    }

\bib{giannitsi2022}{article}{
    author={Giannitsi, Christina},
    title={Averaging with the divisor function: $\ell ^p$ improving and sparse bounds.},
    journal={Rocky Mountain J. Math.},
    volume={52},
    date={2022},
    number={6},
    pages={2027-2039},
    doi={10.1216/rmj.2022.52.2027},
    }

\bib{gklmr2023}{article}{
    author={Giannitsi, Christina},
    author={Krause, Ben},
    author={Lacey, Michael T.},
    author={Mousavi, Hamed},
    author={Rahimi, Yaghoub},
    title={Averages over the Gaussian Primes: Goldbach's Conjecture and Improving Estimates},
    journal={\href{https://arxiv.org/pdf/2309.14249.pdf}{arXiv}},
    volume={ 	arXiv:2102.01778},
    date={2023},
    number={ },
    pages={ },
    doi={ \url{https://doi.org/10.48550/arXiv.2309.14249}},
    }

\bib{glmr2022}{article}{
    author={Giannitsi, Christina},
    author={Lacey, Michael T.},
    author={Mousavi, Hamed},
    author={Rahimi, Yaghoub},
    title={Improving and maximal inequalities for primes in progressions},
    journal={Banach J. Math. Anal.},
    volume={16},
    date={2022},
    number={3},
    pages={Paper No. 42, 25},
    doi={10.1007/s43037-022-00191-9},
    }

\bib{hkly2020}{article}{
    author={Han, Rui},
    author={Krause, Ben},
    author={Lacey, Michael T.},
    author={Yang, Fan},
    title={Averages along the primes: improving and sparse bounds},
    journal={Concr. Oper.},
    volume={7},
    date={2020},
    number={1},
    pages={45--54},
    doi={10.1515/conop-2020-0003},
    }    

\bib{KLM2020}{article}{
    author = {Kesler, Robert},
    author = {Lacey, Micheal},
    author = {Mena, Dario},
    title = {Sparse bounds for the discrete spherical maximal functions},
    journal = {Pure Appl. Anal. 2},
    pages = {75-92	},
    volume = { no.1},
    year = {2020},
    }

\bib{krause2022}{article}{
    author={Krause, Ben},
    title={Pointwise Ergodic Theory: Examples and Entropy [after J. Bourgain]},
    Journal={S\'eminaire Bourbaki, 75 ann\'ee},
    date={2022-2023},
    volume = { no.1199},
    pages={01-31},
    }

\bib{krause-book2023}{book}{
    author={Krause, Ben},
    title={Discrete analogues in harmonic analysis---Bourgain, Stein, and
    beyond},
    series={Graduate Studies in Mathematics},
    volume={224},
    publisher={American Mathematical Society, Providence, RI},
    date={2022},
    pages={xxvi+563},
    isbn={[9781470468576]},
    isbn={[9781470471743]},
    isbn={[9781470471750]},
    review={\MR{4512201}},
    }

\bib{lacey1997}{article}{
    title={On an inequality due to Bourgain},
    author={Lacey, Michael},
    journal={Illinois J. Math},
    volume={41},
    number={2},
    pages={231--236},
    year={1997},
    publisher={Duke University Press}
    }

\bib{LaVictoire2011}{article}{
    title={Universally $L1$-bad arithmetic sequences},
    author={LaVictoire, Patrick},
    journal={J. Anal. Math},
    volume={113},
    number={1},
    pages={241-263},
    year={2011},
    }

\bib{Mirek-Trojan2015}{article}{
    title={Cotlar's ergodic theorem along the prime numbers},
    author={Mirek, Mariusz},
    author={Trojan, Bartosz},
    year={2015},
    Journal={J. Fourier Anal. Appl.},
    pages={822--848},
    volume={21},
    number={4},
    }

\bib{Nair1991}{article}{
    title={On polynomials in primes and J. Bourgain’s circle method approach to ergodic theorems},
    volume={11},
    DOI={10.1017/S0143385700006295},
    number={3},
    journal={Ergodic Theory Dynam. Systems}, author={Nair, Radhakrishnan},
    year={1991},
    pages={485–499}}

\bib{Rosenblatt-Wierdl1995}{inproceedings}{
    title={Pointwise ergodic theorems via harmonic analysis},
    author={Rosenblatt, Joseph M.},
    author={Wierdl, Máté},
    place={Cambridge},
    series={London Mathematical Society Lecture Note Series},
    booktitle={Ergodic Theory and Harmonic Analysis: Proceedings of the 1993 Alexandria Conference},
    publisher={Cambridge University Press},
    year={1995},
    pages={3–152},
    collection={London Mathematical Society Lecture Note Series}}

\bib{wierdl1988}{article}{
    author={Wierdl, M\'{a}t\'{e}},
    title={Pointwise ergodic theorem along the prime numbers},
    journal={Israel J. Math.},
    volume={64},
    date={1988},
    number={3},
    pages={315--336 (1989)},
    doi={10.1007/BF02882425},
    }
\end{biblist} 
\end{bibdiv}\end{document}